\title[Ideals in O.S. Projective Tensor Product]{Ideals in Operator
  Space Projective Tensor Product of $C^*$-algebras}
\newtheorem{theorem}{\sc Theorem}[section]
\newtheorem{corollary}[theorem]{\sc Corollary}
\newtheorem{prop}[theorem]{\sc Proposition}
\newtheorem{remark}[theorem]{\sc Remark}
\newtheorem{lemma}[theorem]{\sc Lemma}
\newcommand{\oop}{\widehat\otimes}
\newcommand{\ra}{\rightarrow}
\def\NN{\mathbb{N}}
\begin{document}

\author[R. Jain]{Ranjana Jain} \address{Department of Mathematics\\
  Lady Shri Ram College for Women\\ New Delhi-110024, India.}
\email{ranjanaj\_81@rediffmail.com}

\author[A. Kumar]{Ajay Kumar}
\address{Department of Mathematics\\ University of Delhi\\ Delhi-110007\\
  India.}  \email{akumar@maths.du.ac.in}

\keywords{$C^\ast$-algebras, Operator space projective tensor norm, Haagerup
  tensor product} 

\subjclass[2000]{46L06,46L07,47L25}

\begin{abstract} 
  For $C^*$-algebras $A$ and $B$, we prove the slice map conjecture
  for ideals in the operator space projective tensor product $A
  \widehat\otimes B$. As an application, a characterization of prime
  ideals in the Banach $\ast$-algebra $A\widehat\otimes B$ is
  obtained. Further, we study the primitive ideals, modular ideals and
  the maximal modular ideals of $A\widehat\otimes B$. It is also shown
  that the Banach $\ast$-algebra $A\widehat\otimes B$ possesses Wiener
  property; and that, for a subhomogeneous $C^*$-algebra $A$, $A
  \widehat\otimes B$ is symmetric.
\end{abstract}

\maketitle

\section{ Introduction}
A systematic study of tensor products of subspaces and subalgebras of
$C^*$-algebras was initiated by Blecher and Paulsen \cite{blepau}, and
Effros and Ruan \cite{effrua1,effrua2}. Analogous constructions to
those of Banach spaces; for example, quotients, duals and tensor
products were defined and studied. For a Hilbert space $H$, let
$B(H)$ denote the bounded operators on $H$. An operator space $X$ on
$H$ is just a closed subspace of $B(H)$. If $E$ and $F$ are operator
spaces, then the operator space projective tensor product, denoted by
$E\widehat{\otimes}F$, is the completion of the algebraic tensor
product $E\otimes F$ under the norm
\[
\|u\|_{_{\wedge}} = \inf\{\|\alpha\|\|v\|\|w\| \|\beta\| : u = \alpha
(v \otimes w) \beta \},\] where the infimum runs over arbitrary
decompositions with $ v\in M_p(E),\,  w\in M_q(F)$, $\alpha \in M_{1,\, pq}$, 
$\beta\in M_{pq,\, 1}$ with $ p, q\in \NN $ arbitrary; $M_{k,\, l}$ being
the space of $k\times l$ matrices over $\mathbb C$. If $E$ and $F$ are
$C^*$-algebras, then $E\widehat\otimes F$ admits a Banach algebra with  canonical isometric involution \cite{kumar}. The main objective of this paper is to study the closed $*$-ideals of this Banach $*$-algebra.
 
In Section 2, we study the slice map problem for ideals of
$A\widehat\otimes B$. Tomiyama \cite{tomiyama1} studied the slice maps
on the tensor product of $C^*$-algebras with respect to the
`min'-norm. Later, Wassermann \cite{wass} discussed the slice map
problem in greater detail, which was then studied and used in
different contexts - see, for instance, \cite{archbold, wass2}. It is
interesting to know that the slice map property is not true for the
`min' norm for all $C^*$-algebras. In fact, for the `min' norm the
slice map problem for ideals is equivalent to the problem of whether every tensor
product $A\otimes_{\min} B$ has Property F of Tomiyama \cite[Remark
24]{wass}. In 1991, Smith \cite{smith} studied the slice map property
for the Haagerup norm and proved that the slice map conjecture is true
for all subspaces of $B(H)$. We give an affirmative answer to the
slice map conjecture for ideals with respect to the operator space
projective tensor norm.

The ideal structure for the Haagerup tensor product and the `min' norm
has been studied extensively in \cite{ass}, \cite{arch} and
\cite{takesaki}. In \cite{kumar} and \cite{ranj2}, the authors
investigated some properties of the closed ideals of the projective tensor
product $A\widehat\otimes B$, for example, sum of the product ideals, minimal and the maximal ideals. In Section 3, we 
discuss a characterization of prime ideals, primitive ideals, and maximal
modular ideals of the Banach $\ast$-algebra $A\widehat\otimes
B$. Finally, in Section 4, certain $*$-algebraic properties of
$A\widehat\otimes B$, namely, Wiener property and symmetry are
studied. Throughout the paper, $A$ and $B$ will denote $C^*$-algebras
unless otherwise specified.

Recall that the Haagerup norm of an element $u$ in the algebraic
tensor product $A\otimes B $ of two $C^*$-algebras $A$ and $B$ is
defined by
$$
\|u\|_h = \inf \{\|\Sigma_i\, a_ia^*_i\|^{1/2}\ \|\Sigma_i\,
b^*_ib_i\|^{1/2}:\, u=\Sigma^{n}_{i=1}a_i\otimes b_i\}.$$
The Haagerup tensor product
$A\otimes_h B$ is defined to be the completion of $A\otimes B$ in the
norm $\|\cdot\|_h$. Also, the Banach space projective norm of $u \in
A\otimes B$ is given by
$$ \|u\|_\gamma = \inf \{\Sigma_i\, \|a_i\| \|b_i\| \,: \, 
u = \Sigma_{i=1}^{n}\ a_i \otimes b_i\}.
$$ 
The norms $\|\cdot\|_h,\|\cdot\|_{_{\wedge}}$ and $\|\cdot\|_\gamma$
on the tensor product $A\otimes B$ of two $C^*$-algebras $A$ and $B$
satisfy
$$
\|\cdot\|_h\, \leq\, \|\cdot\|_{_{\wedge}}\, \leq\, \|\cdot\|_\gamma.
$$
Necessary and sufficient conditions on $A$ and $B$ for the equivalence of these norms can be seen in \cite{kumar1}.


\section{Slice Map Property for Ideals}

\noindent For each $\phi \in A^*$, define a linear map $R_{\phi}:A
\otimes B \rightarrow B$ by
$$
R_{\phi}(\Sigma_{i=1}^{n}\, a_i \otimes b_i)=\Sigma_{i=1}^{n}\, \phi (a_i)
b_i.
$$
Then, it can be easily seen that $R_\phi$ is well defined. Also, it is
continuous with respect to the `min'-norm \cite{wass} and hence for the larger operator space projective tensor norm
with $\|R_{\phi} \| \leq \|\phi\|$; so, it can be extended to
$A\widehat\otimes B$ as a bounded linear map and is known as the {\em
  right slice map} associated to $\phi$. Similarly, one can define the
left slice map $L_{\psi}$ for each $\psi \in B^*$. For a closed ideal
$J$ of $B$, $A\widehat\otimes J$ is a closed ideal of
$A\widehat\otimes B$ \cite{kumar} and clearly $R_{\phi}(x) \in J$ for
all $x\in A\widehat\otimes J$.  We prove the converse of this
statement which is known as the slice map problem for ideals.

\begin{lemma}\label{total} 
  The set \{$R_\phi : \phi \in A^*$\} is total on $A\widehat\otimes
  B$, that is, if $x \in A\widehat\otimes B$ and $R_\phi (x)=0$ for
  all $\phi \in A^*$, then $x=0$.
\end{lemma}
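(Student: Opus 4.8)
The plan is to recast the totality of the family $\{R_\phi\}$ as a separation statement for elementary functionals, and then to transport the question into a tensor norm where slice maps are already known to separate points. The starting observation is that for $\phi\in A^*$ and $\psi\in B^*$ one has, on elementary tensors and hence by continuity on all of $A\widehat\otimes B$, the identity $\psi(R_\phi(x)) = (\phi\otimes\psi)(x) = \phi(L_\psi(x))$. Since $B^*$ separates the points of $B$, the hypothesis $R_\phi(x)=0$ for every $\phi\in A^*$ is therefore equivalent to $(\phi\otimes\psi)(x)=0$ for all $\phi\in A^*$ and $\psi\in B^*$; that is, $x$ is annihilated by every functional in the algebraic tensor product $A^*\otimes B^*\subseteq (A\widehat\otimes B)^*$. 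Thus the lemma is precisely the assertion that $A^*\otimes B^*$ is total on $A\widehat\otimes B$.

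Next I would make these functionals concrete. Fixing faithful (say universal) representations $A\subseteq B(H)$ and $B\subseteq B(K)$, the vector functionals $\phi=\omega_{\xi,\xi'}|_A$ and $\psi=\omega_{\eta,\eta'}|_B$ give $(\phi\otimes\psi)(x)=\langle\theta(x)(\xi\otimes\eta),\,\xi'\otimes\eta'\rangle$, where $\theta:A\widehat\otimes B\to A\otimes_{\min}B\subseteq B(H\otimes K)$ is the canonical contraction coming from $\|\cdot\|_{\min}\le\|\cdot\|_{\wedge}$. Hence the vanishing of $x$ against all such vector functionals says exactly that every matrix coefficient of the operator $\theta(x)$ is zero, i.e. $\theta(x)=0$; in particular the hypothesis of the lemma forces $\theta(x)=0$. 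Conversely, since every functional on a $C^*$-algebra is a norm limit of finite sums of vector functionals in its universal representation and $\phi\mapsto R_\phi(x)$ is norm continuous, vanishing against vector functionals already propagates to all of $A^*\otimes B^*$. Consequently the lemma is equivalent to the injectivity of $\theta$, and this is the point on which the whole argument turns.

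The heart of the matter, and the step I expect to be hardest, is therefore this injectivity: an element of $A\widehat\otimes B$ that dies in the spatial tensor product must already be zero. I would route through the Haagerup tensor product: since $\|\cdot\|_h\le\|\cdot\|_{\wedge}$ there is a canonical map $\iota:A\widehat\otimes B\to A\otimes_h B$ satisfying $R_\phi=R_\phi^{h}\circ\iota$, and by Smith's theorem \cite{smith} the slice maps $\{R_\phi^{h}\}$ are total on $A\otimes_h B$; the vector-functional computation above then shows that $A\otimes_h B$ itself embeds injectively into $A\otimes_{\min}B$, so it suffices to prove that $\iota$ is injective. This is where one must genuinely use that both factors are $C^*$-algebras rather than arbitrary operator spaces, for which the analogous map can fail to be injective in the absence of an approximation property. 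The mechanism I would exploit is positivity together with the isometric involution of the Banach $\ast$-algebra $A\widehat\otimes B$ \cite{kumar}: for $x$ with $\theta(x)=0$ one has $\theta(x^*x)=0$, and I would try to show that the states of the $C^*$-algebra $A\otimes_{\min}B$, pulled back along $\theta$, form a faithful family on the positive cone of $A\widehat\otimes B$. Establishing the faithfulness of this pulled-back family is the crux; granting it, $x^*x=0$ and the isometric involution force $x=0$, which completes the proof.
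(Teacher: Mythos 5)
Your first two paragraphs are correct and in fact reproduce the paper's own argument: the paper likewise passes to the functionals $\phi\otimes\psi$, realizes them as matrix coefficients $\langle(\pi_A\otimes\pi_B)(i(x))\xi_1\otimes\eta_1,\xi_2\otimes\eta_2\rangle$ in faithful representations, and concludes that the hypothesis forces the image of $x$ in $A\otimes_{\min}B$ to vanish. You also correctly identify where the entire content of the lemma lives: everything reduces to the injectivity of the canonical map $\theta:A\widehat\otimes B\to A\otimes_{\min}B$ (equivalently, via Smith's theorem \cite{smith}, of $\iota:A\widehat\otimes B\to A\otimes_h B$). But this is exactly the step your proposal does not prove. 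The paper closes it by citing a previously published result, \cite[Corollary 1]{jain}, which establishes precisely this injectivity for $C^*$-algebras; you instead offer a sketch prefaced by ``I would try to show'' and ``granting it,'' so the proposal, as written, proves only the easy reformulation and leaves the crux open.

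Moreover, the sketch you offer for the crux would not succeed as stated. First, the asserted faithfulness of the states of $A\otimes_{\min}B$ pulled back along $\theta$ on the ``positive cone'' of $A\widehat\otimes B$ is not an independent mechanism: a pulled-back functional vanishes on $\ker\theta$ by construction, so faithfulness of this family on elements of the form $x^*x$ is essentially a restatement of the injectivity of $\theta$ on such elements --- the argument is circular. Second, the final inference ``$x^*x=0$ and the isometric involution force $x=0$'' is invalid in a general Banach $\ast$-algebra: an isometric involution gives only $\|x^*\|=\|x\|$, not the $C^*$-identity $\|x^*x\|=\|x\|^2$, and $\|\cdot\|_{\wedge}$ is not a $C^*$-norm, so $x^*x=0$ does not by itself yield $x=0$ in $A\widehat\otimes B$; indeed, if you had a tool making $x\mapsto x^*x$ faithful you would already have injectivity directly, and the detour through $x^*x$ adds nothing. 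The honest repair is simply to invoke the known injectivity of $A\widehat\otimes B\to A\otimes_{\min}B$ from \cite{jain} (whose proof uses the concrete matricial description of elements of the operator space projective tensor product, not a positivity argument), after which your reduction completes the proof exactly as in the paper.
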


\begin{proof}
  For $\phi \in A^*$ and $\psi \in B^*$, consider $\phi \otimes \psi:
  A\otimes B \rightarrow \mathbb{C}$ given by
$$
(\phi \otimes \psi)( \Sigma_i\, a_i \otimes b_i) = \Sigma_i\,
\phi(a_i) \psi(b_i).
$$
Note that, by the definition of  the Banach space injective norm $\lambda$  \cite[page 188]{takesaki}, we have $|\Sigma_i\,\phi(a_i) \psi(b_i)| \leq \|\phi\| \|\psi\| \| \Sigma_i a_i \otimes b_i\|_\lambda$. Thus $\phi \otimes \psi$ is continuous with respect to larger norms, in particular, `min'-norm and `$\wedge$'-norm; so, $\phi \otimes \psi$ can be
extended to continuous linear functionals on $A\otimes_{\min} B$ and
$A\widehat\otimes B$. Let us denote its extensions by $\phi
\otimes_{\min} \psi$ and $\phi \widehat\otimes \psi$
respectively. We claim that the set $\{\phi \widehat\otimes \psi : \phi \in A^*, \, \psi \in B^* \}$ is total
on $A \widehat\otimes B$. For this, consider  an element $x \in A\widehat\otimes B$ such that
 $$ (\phi \widehat\otimes \psi )(x)=0, \, \forall \phi \in A^*, \, \psi \in B^*.$$
Observe that for the canonical map $i: A\widehat\otimes B \rightarrow A\otimes_{\min}B$, the maps $\phi \widehat\otimes \psi$ and $(\phi \otimes_{\min} \psi)\circ i $ both are continuous on $A\widehat\otimes B$ and agree on $A\otimes B$, giving $(\phi \otimes_{\min} \psi)(i(x))=0$ for all  $\phi \in A^*, \, \psi \in B^*$. Now, for faithful representations $\{\pi_A,H\}$ and $\{\pi_B,K\}$ of $A$ and $B$ respectively, for $\xi_i \in H,\, \eta_i\in K$, $i=1,2$; $ \phi:= \langle\pi_{A}(\cdot) \xi_1,\xi_2 \rangle\,\in A^* ,\, \psi:= \langle \pi_{B}(\cdot) \eta_1,\eta_2\rangle\, \in B^*$; so
$$0=(\phi \otimes_{\min} \psi)(i(x))= \langle(\pi_A \otimes \pi_B)(i(x))\xi_1\otimes \eta_1,\xi_2\otimes \eta_2 \rangle.$$
This holds for all $\xi_i \in H,\,\eta_i\in K;\,i=1,2$ giving $(\pi_A \otimes \pi_B) (i(x))=0$. Using the facts that $\pi_A \otimes \pi_B$ is faithful \cite[Theorem IV.4.9]{takesaki}, and that $i$ is injective \cite[Corollary 1]{jain}
we obtain the claim. Finally, the relation
$$  \langle x, \phi \widehat\otimes \psi \rangle = \langle R_{\phi}(x),
\psi \rangle = \langle L_\psi (x), \phi \rangle,\, \forall x \in
A\widehat\otimes B,$$
gives the required result.
\end{proof}
Recall that, for Banach spaces $X$ and $Y$, a mapping $\theta:X\rightarrow Y$ is said to be a {\it quotient map} if it maps the open unit ball of $X$ onto that of $Y$ \cite{effrua1}. Clearly, a quotient map is surjective, and  for Banach space $X$ and a closed subspace $Y$ of $X$, the canonical quotient map $\pi:X \ra X/Y$ is  a quotient map in the above sense. Like in the case of Haagerup tensor product \cite{ass}, the operator space projective tensor product of quotient maps behaves nicely. Although straight forward, we include a proof of the following for the sake of convenience:
\begin{lemma}\label{ker}
 Let $I$ and $J$ be closed ideals of the $C^*$-algebras $A$ and $B$, and $\pi:A \rightarrow A/I$ and $\rho:B \rightarrow B/J$ be the quotient maps. Then,
\begin{enumerate}
 \item $\pi \oop \rho: A \oop B \ra (A/I) \oop (B/J)$ is a quotient map with $$\ker(\pi \oop \rho) = A \oop J + I\oop B.$$
 \item for a closed ideal $K$ of $A\oop B$ containing $\ker(\pi \oop \rho)$, $(\pi \oop \rho) (K)$ is a closed ideal of $(A/I) \oop (B/J)$ with $$ (\pi \oop \rho)^{-1}((\pi \oop \rho)(K)) = K.$$
\end{enumerate}
\end{lemma}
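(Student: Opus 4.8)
The plan is to deduce everything from the \emph{projectivity} of the operator space projective tensor product: if $q_1$ and $q_2$ are complete quotient maps of operator spaces, then $q_1 \oop q_2$ is again a complete quotient map (Effros--Ruan \cite{effrua1}). Since $I$ and $J$ are closed ideals of $C^*$-algebras, the quotient maps $\pi$ and $\rho$ are complete quotient maps in the operator space sense, so $\pi \oop \rho$ is a complete quotient map; in particular it is a quotient map in the sense defined above, which is the first assertion of (1). Throughout I shall use the cited fact \cite{kumar} that for a closed ideal (say $J$ in $B$) the space $A \oop J$ sits completely isometrically inside $A \oop B$ as a closed ideal, and likewise for $I \oop B$ and $I \oop (B/J)$.

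For the kernel I would factor
$$\pi \oop \rho = (\pi \oop \mathrm{id}_{B/J}) \circ (\mathrm{id}_A \oop \rho),$$
where each factor is a complete quotient map by projectivity. The first step is to identify the kernels of the two factors. By the right-exactness of $\oop$ in a single variable (a direct consequence of projectivity), $\ker(\mathrm{id}_A \oop \rho)$ is the closure of $A \otimes J$ in $A \oop B$, which by \cite{kumar} is exactly the closed ideal $A \oop J$; similarly $\ker(\pi \oop \mathrm{id}_{B/J}) = I \oop (B/J)$. The inclusion $A \oop J + I \oop B \subseteq \ker(\pi \oop \rho)$ is then immediate.

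The reverse inclusion is the heart of the argument and uses a lifting. I would take $x \in \ker(\pi \oop \rho)$ and set $y = (\mathrm{id}_A \oop \rho)(x)$; since the composite is $\pi\oop\rho$, we get $(\pi \oop \mathrm{id}_{B/J})(y) = 0$, so $y \in I \oop (B/J)$. As $\mathrm{id}_I \oop \rho : I \oop B \to I \oop (B/J)$ is again a complete quotient map, it is surjective, so there is $z \in I \oop B$ with $(\mathrm{id}_A \oop \rho)(z) = y$. Then $x - z \in \ker(\mathrm{id}_A \oop \rho) = A \oop J$, whence $x = z + (x - z) \in I \oop B + A \oop J$. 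This yields $\ker(\pi \oop \rho) = A \oop J + I \oop B$, and incidentally shows this sum is already closed. I expect the main obstacle to be precisely the identification of the one-variable kernels via right-exactness together with the closed-ideal embedding of \cite{kumar}; once that is secured, the lifting is routine.

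Part (2) is then a formal consequence. Writing $\Phi = \pi \oop \rho$ and $N = \ker \Phi$, note that $\Phi$ is a surjective $*$-homomorphism, since $\pi$ and $\rho$ are $*$-homomorphisms and the product and involution on $\oop$ are the canonical ones; hence for a closed ideal $K$ with $N \subseteq K$ the image $\Phi(K)$ is an ideal of $(A/I) \oop (B/J)$. The equality $\Phi^{-1}(\Phi(K)) = K$ follows from $N \subseteq K$: if $\Phi(x) = \Phi(k)$ with $k \in K$, then $x - k \in N \subseteq K$, so $x \in K$, the reverse inclusion being trivial. Finally, since $\Phi$ is a complete quotient map, the induced map $(A \oop B)/N \to (A/I) \oop (B/J)$ is a topological isomorphism carrying the closed set $K/N$ onto $\Phi(K)$; therefore $\Phi(K)$ is closed, completing (2).
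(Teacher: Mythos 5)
Your proof is correct, and it differs from the paper mainly in part (1): there the paper gives no argument at all, simply citing \cite[Proposition 3.5]{ranj2}, whereas you reprove the kernel formula directly by factoring $\pi\oop\rho=(\pi\oop\mathrm{id}_{B/J})\circ(\mathrm{id}_A\oop\rho)$, identifying the one-variable kernels, and lifting through the complete quotient map $\mathrm{id}_I\oop\rho:I\oop B\to I\oop(B/J)$. This is the standard projectivity argument (essentially the Effros--Ruan kernel formula for $\oop$ of complete quotient maps), and it is sound; the step you rightly flag as the crux --- that $\ker(\mathrm{id}_A\oop\rho)$ is exactly the closure of $A\otimes J$ and nothing larger --- does not follow from surjectivity of the tensored map alone, but from the duality $(A\oop B)^*\cong CB(A,B^*)$, under which the annihilator of $A\otimes J$ is identified with $CB(A,J^\perp)\cong (A\oop(B/J))^*$; this is standard and is precisely what the cited proposition encapsulates, and your appeal to \cite{kumar} to view $A\oop J$, $I\oop B$ and $I\oop(B/J)$ as closed ideals inside the ambient tensor products, together with the continuity argument making the lifting square commute, closes the remaining gaps. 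A pleasant by-product of your lifting is that it exhibits $A\oop J+I\oop B$ as a kernel, hence closed, which the statement tacitly requires. For part (2) your route coincides with the paper's, both resting on the isomorphism $(A\oop B)/\ker(\pi\oop\rho)\cong (A/I)\oop(B/J)$ from part (1); your version is in fact slightly cleaner, since a genuine quotient map induces an \emph{isometric} isomorphism, so $\Phi(K)$ is closed because $K/N$ is, while the paper runs an explicit $\epsilon$-argument carrying a superfluous constant $c$.
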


\begin{proof}
  (1) This follows directly from \cite[Proposition 3.5]{ranj2}. \\
(2) Consider an element $(\pi \oop \rho )(x) \in (A/I)\oop (B/J)$ such that $(\pi \oop \rho )(x) \in \text{cl} ((\pi\oop \rho)(K))$, where $x \in A\widehat\otimes B$. Given an arbitrary $\epsilon >0 $, there exists $k \in K$ such that $$\|(\pi \oop \rho)(k-x)\|_{(A/I) \oop (B/J)} < \epsilon.$$ Using part (1) above, there is an isomorphism between $(A \oop B)/ Z$ and $(A/I)\oop\\ (B/J)$, where $Z = \ker(\pi \oop \rho)$. Therefore, $$\|(k-x) + Z\|_{(A \oop B)/Z} < c\epsilon,$$ for some constant $c$. So, there exists some $z\in Z \subseteq K$ with $\|(k + z) -x\|_{(A \oop B)/Z} \leq c\epsilon$. Since $K$ is closed and $k+z \in K$, we must have $x\in K$, which proves the claim. Finally, the equation in the statement is a routine verification.
\end{proof}

 We are now prepared to present a proof of the slice map problem for ideals.
\begin{theorem}\label{slice}
  Let $J$ be a closed ideal of $B$. Then
$$
A\widehat\otimes J= \{ x \in A\widehat\otimes B : R_\phi (x) \in J \
\text{for all}\   \phi \in A^*\}.$$
\end{theorem}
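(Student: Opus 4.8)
The inclusion $A\oop J \subseteq \{x : R_\phi(x) \in J \text{ for all } \phi \in A^*\}$ has already been recorded in the discussion preceding Lemma~\ref{total}, so my plan is to prove only the reverse containment. The guiding idea is to realize $A\oop J$ as the kernel of a suitable quotient map and then let the totality of the right slice maps from Lemma~\ref{total} do the work. Concretely, I would first apply Lemma~\ref{ker}(1) with $I=\{0\}$ and the quotient map $\rho:B\ra B/J$. Since the quotient map $A\ra A/\{0\}$ is just $\mathrm{id}_A$, the induced map $\mathrm{id}_A\oop\rho:A\oop B\ra A\oop(B/J)$ is a quotient map with
$$\ker(\mathrm{id}_A\oop\rho)=A\oop J+\{0\}\oop B=A\oop J.$$
Thus it suffices to show that any $x\in A\oop B$ satisfying $R_\phi(x)\in J$ for all $\phi\in A^*$ lies in this kernel, i.e.\ that $(\mathrm{id}_A\oop\rho)(x)=0$.

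The next step is to establish the intertwining relation
$$\widetilde R_\phi\circ(\mathrm{id}_A\oop\rho)=\rho\circ R_\phi,\qquad \phi\in A^*,$$
where $\widetilde R_\phi$ denotes the right slice map on $A\oop(B/J)$ associated to $\phi$. Both sides are bounded linear maps $A\oop B\ra B/J$, and on an elementary tensor $a\otimes b$ each sends it to $\phi(a)\rho(b)$; hence they agree on all of $A\oop B$ by continuity and the density of $A\otimes B$. Consequently, if $R_\phi(x)\in J=\ker\rho$ for every $\phi$, then $\widetilde R_\phi\big((\mathrm{id}_A\oop\rho)(x)\big)=\rho(R_\phi(x))=0$ for every $\phi\in A^*$.

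Finally, I would invoke Lemma~\ref{total} applied to the pair of $C^*$-algebras $A$ and $B/J$ (note $B/J$ is again a $C^*$-algebra, so the lemma applies verbatim): the family $\{\widetilde R_\phi:\phi\in A^*\}$ is total on $A\oop(B/J)$, so the vanishing of all $\widetilde R_\phi\big((\mathrm{id}_A\oop\rho)(x)\big)$ forces $(\mathrm{id}_A\oop\rho)(x)=0$, that is, $x\in A\oop J$. The substantive content of the argument is really carried by Lemma~\ref{total} (totality, via injectivity of the min-norm embedding and faithfulness of the minimal tensor representation) together with the good behaviour of $\oop$ under quotient maps from Lemma~\ref{ker}. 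Given these, the only points that require care—and hence the mild obstacles to watch—are the correct identification of $\ker(\mathrm{id}_A\oop\rho)$ with $A\oop J$ and the verification of the intertwining relation, both of which are routine.
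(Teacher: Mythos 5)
Your proposal is correct and follows essentially the same route as the paper: both realize $A\widehat\otimes J$ as $\ker(\mathrm{id}_A\widehat\otimes\rho)$ via Lemma~\ref{ker}, establish the intertwining relation $\widetilde R_\phi\circ(\mathrm{id}_A\widehat\otimes\rho)=\rho\circ R_\phi$ by density of $A\otimes B$, and then apply the totality of slice maps (Lemma~\ref{total}) on $A\widehat\otimes(B/J)$ to conclude. The only differences are cosmetic (your explicit specialization $I=\{0\}$ and the remark that $B/J$ is again a $C^*$-algebra, which the paper leaves implicit).
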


\begin{proof}
 Consider an element $ x \in A\widehat\otimes B$ such that $R_\phi (x) \in J $ for all $\phi \in A^*$. From Lemma \ref{ker}, corresponding to the quotient map $\pi: B
  \rightarrow B/J$, we have another quotient map $ i \widehat \otimes
  \pi: A\widehat\otimes B \rightarrow A\widehat\otimes (B/J)$ with $\ker(i\widehat\otimes \pi) = A\widehat\otimes J$, where  `$i$' is the identity map on $A$. Also observe that, by continuity and agreement on $A\otimes B$, 
$$ \pi \circ R_\phi = r_{\phi} \circ (i \widehat\otimes \pi),$$
where $r_\phi:A\widehat\otimes (B/J) \rightarrow B/J$ is the right
slice map. Using the fact that $R_\phi (x) \in J$ for all $\phi \in
A^*$, we obtain $r_\phi ( i\widehat\otimes \pi (x)) = 0$ for all $\phi
\in A^*$. Thus, by Lemma \ref{total}, $i\widehat\otimes \pi (x) = 0$;
so that $x\in \ker(i\widehat\otimes \pi) = A\widehat\otimes J$. The other containment is easy.
\end{proof}

We next give an application of Theorem \ref{slice} which
will be used later to characterize the prime ideals. For the Haagerup
norm such a result was proved for subspaces of $B(H)$ in
\cite[Corollary 4.6]{smith}.

\begin{prop}\label{intersection}
  Let $A_1$, $A_2$ and $B_1, B_2$ be closed ideals of $A$ and $B$,
  respectively. Then,
$$ 
(A_1 \widehat\otimes B_1) \cap (A_2 \widehat\otimes B_2) = (A_1 \cap
A_2) \widehat\otimes (B_1 \cap B_2).$$
\end{prop}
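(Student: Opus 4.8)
The plan is to prove the two inclusions separately, using the slice map characterization from Theorem \ref{slice} as the main tool. The inclusion $(A_1 \cap A_2) \widehat\otimes (B_1 \cap B_2) \subseteq (A_1 \widehat\otimes B_1) \cap (A_2 \widehat\otimes B_2)$ is the easy direction: since $A_1 \cap A_2 \subseteq A_j$ and $B_1 \cap B_2 \subseteq B_j$ for $j = 1, 2$, the functoriality of $\widehat\otimes$ (inclusions of closed ideals induce contractive inclusions of the tensor products, a consequence of Lemma \ref{ker} applied with the relevant quotients) gives $(A_1 \cap A_2) \widehat\otimes (B_1 \cap B_2) \subseteq A_j \widehat\otimes B_j$ for each $j$, and hence it is contained in the intersection.

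For the reverse inclusion I would exploit a symmetry in the two variables: there is a companion left-slice-map version of Theorem \ref{slice}, namely $I \widehat\otimes B = \{x : L_\psi(x) \in I \text{ for all } \psi \in B^*\}$ for a closed ideal $I$ of $A$, obtained by the obvious symmetric argument (interchanging the roles of $A$ and $B$ and of $R_\phi$ and $L_\psi$ throughout the proofs of Lemmas \ref{total}, \ref{ker} and Theorem \ref{slice}). So I would first prove the intermediate identity
$$
(A \widehat\otimes B_1) \cap (A \widehat\otimes B_2) = A \widehat\otimes (B_1 \cap B_2),
$$
purely in the $B$-variable. Take $x$ in the left-hand side. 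By Theorem \ref{slice} applied to $B_1$ we get $R_\phi(x) \in B_1$ for all $\phi \in A^*$, and applied to $B_2$ we get $R_\phi(x) \in B_2$; hence $R_\phi(x) \in B_1 \cap B_2$ for every $\phi$, and a final application of Theorem \ref{slice} (with the closed ideal $B_1 \cap B_2$) puts $x$ in $A \widehat\otimes (B_1 \cap B_2)$. The opposite containment is the easy direction already noted. By the symmetric left-slice version I likewise obtain
$$
(A_1 \widehat\otimes B) \cap (A_2 \widehat\otimes B) = (A_1 \cap A_2) \widehat\otimes B.
$$

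Finally I would assemble the full statement from these two one-variable identities by a short chain of intersections. Writing $C = A_1 \cap A_2$ and $D = B_1 \cap B_2$, the idea is to intersect in one variable at a time. First note $A_1 \widehat\otimes B_1 \subseteq A_1 \widehat\otimes B$ and similarly for the other factors, so one can reduce the four-ideal intersection to nested applications of the two identities above; concretely, $(A_1 \widehat\otimes B_1) \cap (A_2 \widehat\otimes B_2) \subseteq (A_1 \widehat\otimes B_1) \cap (A_2 \widehat\otimes B) \cap (A \widehat\otimes B_2) \cap (A_2 \widehat\otimes B_2)$ and grouping the $A$-factors and $B$-factors and applying the one-variable results collapses the intersection down to $C \widehat\otimes D$. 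The main obstacle I anticipate is bookkeeping in this assembly step: the slice-map identities are genuinely one-variable statements, so I must be careful that when I fix one tensor factor to be all of $A$ (or all of $B$) the hypotheses of Theorem \ref{slice} and its symmetric analogue are met exactly, and that the intermediate intersections are taken inside the correct ambient tensor product with the inclusions of Lemma \ref{ker} respected. Once the two one-variable identities are in hand, though, no further analysis is needed — the combination is formal set-theoretic manipulation.
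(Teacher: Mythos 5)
Your easy inclusion is fine, and both one-variable identities you extract from Theorem \ref{slice} and its left-handed analogue are correct as stated. The genuine gap is the assembly step. Enlarging factors and applying your two identities gives, with $C = A_1 \cap A_2$ and $D = B_1 \cap B_2$,
$$(A_1 \widehat\otimes B_1) \cap (A_2 \widehat\otimes B_2) \subseteq (C \widehat\otimes B) \cap (A \widehat\otimes D),$$
but at this point you are stuck with a \emph{mixed} intersection to which neither one-variable identity applies: each of them requires the non-intersected factor to be the full algebra in \emph{both} terms being intersected, whereas here one term has full factor $B$ and the other has full factor $A$. The needed equality $(C \widehat\otimes B) \cap (A \widehat\otimes D) = C \widehat\otimes D$ is itself an instance of the very proposition being proved (it is exactly the Corollary the paper deduces right after it), so your closing claim that ``no further analysis is needed --- the combination is formal set-theoretic manipulation'' is false; the reduction is circular precisely where the real work lies.

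What is missing is a Hahn--Banach extension-of-functionals step, which is how the paper's own proof finishes. After the right slice maps place $v$ in $A \widehat\otimes D$ (your first identity accomplishes this), one must apply the left slice map theorem to the \emph{pair} $(A, D)$ rather than $(A,B)$: take $\psi \in D^*$, extend it to $\tilde\psi \in B^*$, and note that $L_\psi$ and $L_{\tilde\psi}$ agree on $A \widehat\otimes D$ (they agree on $A \otimes D$ and are continuous). Since $v \in (A_1 \widehat\otimes B_1) \cap (A_2 \widehat\otimes B_2)$ forces $L_{\tilde\psi}(v) \in A_1 \cap A_2 = C$, one gets $L_\psi(v) \in C$ for every $\psi \in D^*$, and the slice map property for $(A,D)$ yields $v \in C \widehat\otimes D$. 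Inserting this step (either directly after your first identity, as the paper does, or at your mixed intersection) repairs the proof. A minor point: your appeal to Lemma \ref{ker} for the easy inclusion is misplaced --- that lemma concerns quotient maps, not inclusions --- but also unnecessary, since under the paper's convention $I \widehat\otimes J$ is the closure of $I \otimes J$ in $A \widehat\otimes B$, making the easy containment immediate.
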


\begin{proof}
  Since $A_i \widehat\otimes B_i,\, i=1,2$ are closed ideals of $A\widehat\otimes B$ \cite{kumar}, it is easy to see that 
$$
(A_1 \cap A_2) \widehat\otimes (B_1 \cap B_2) \subseteq (A_1
\widehat\otimes B_1) \cap (A_2 \widehat\otimes B_2).
$$
For the other containment, consider an element $v \in (A_1
\widehat\otimes B_1) \cap (A_2 \widehat\otimes B_2)$. Then, $R_\phi(v)
\in B_1 \cap B_2$ for all $\phi \in A^*$; so, by Theorem \ref{slice},
$v\in A \widehat\otimes (B_1 \cap B_2)$.  Next, consider any $\psi \in
(B_1 \cap B_2)^*$ and let $\tilde{\psi}$ be an extension on
$B^*$. Again, $L_{\tilde{\psi}}(v) \in (A_1 \cap A_2)$ and $L_\psi(v)
= L_{\tilde{\psi}}(v)$; so that $L_\psi(v) \in (A_1 \cap A_2)$. This
is true for every $\psi \in (B_1 \cap B_2)^*$; so, applying the slice
map property once again for the left slice map, we obtain $v \in (A_1
\cap A_2)\widehat\otimes (B_1 \cap B_2)$, which proves the claim.
\end{proof} 

Using the slice map property for the right and the left slice maps, and the technique of extending linear functionals as done in Proposition \ref{intersection}, we can easily deduce the following:

\begin{corollary}
For closed ideals $I$ and $J$ of $A$ and $B$ respectively, we have
$$ I \widehat\otimes J = \{ x \in A\widehat\otimes B : \,\, R_\phi(x) \in J ,\, L_\psi(x) \in I; \, \forall \, \phi \in A^*, \, \forall \,\psi \in B^* \}.$$
\end{corollary}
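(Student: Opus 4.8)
The plan is to establish the two inclusions separately. The inclusion $I \oop J \subseteq \{x \in A \oop B : R_\phi(x) \in J,\ L_\psi(x) \in I\}$ is immediate: since $R_\phi$ carries $A \oop J$ into $J$ and, symmetrically, $L_\psi$ carries $I \oop B$ into $I$, and since $I \oop J$ sits inside both $A \oop J$ and $I \oop B$, we obtain $R_\phi(x) \in J$ and $L_\psi(x) \in I$ for every $x \in I \oop J$, $\phi \in A^*$ and $\psi \in B^*$.

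For the reverse inclusion I would take $x \in A \oop B$ with $R_\phi(x) \in J$ for all $\phi \in A^*$ and $L_\psi(x) \in I$ for all $\psi \in B^*$, and proceed in two stages, just as in Proposition \ref{intersection}. First, the condition on the right slice maps together with Theorem \ref{slice} places $x$ in $A \oop J$. Since $J$ is itself a $C^*$-algebra, $A \oop J$ is again the operator space projective tensor product of two $C^*$-algebras, so the left-handed analogue of Theorem \ref{slice} (valid by the symmetry of $\oop$ in its two arguments) applies there and reads $I \oop J = \{y \in A \oop J : L_\psi(y) \in I \text{ for all } \psi \in J^*\}$. To feed $x$ into this description I would use the functional-extension device of Proposition \ref{intersection}: for $\psi \in J^*$ pick a Hahn--Banach extension $\tilde\psi \in B^*$, note that $L_{\tilde\psi}(x) \in I$ by hypothesis, and verify that $L_\psi(x) = L_{\tilde\psi}(x)$.

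That last equality is where the only genuine care is needed, and I expect it to be the main obstacle. The map $L_\psi$ on $A \oop J$ and the restriction to $A \oop J$ of the map $L_{\tilde\psi}$ on $A \oop B$ agree on the elementary tensors $a \otimes b$ with $b \in J$, where both equal $\psi(b)\,a = \tilde\psi(b)\,a$; since the projective norm on $A \oop J$ dominates the one it inherits from $A \oop B$, both maps are continuous for the $A \oop J$ norm, and as $A \otimes J$ is dense they coincide on all of $A \oop J$. Hence $L_\psi(x) \in I$ for every $\psi \in J^*$, and the left-handed slice map property on $A \oop J$ forces $x \in I \oop J$, completing the argument; everything apart from this compatibility check is a direct citation of Theorem \ref{slice}. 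Alternatively, one may bypass the extension step entirely: the hypotheses already give $x \in A \oop J$ and $x \in I \oop B$ (by Theorem \ref{slice} and its left analogue), whence Proposition \ref{intersection}, applied with $A_1 = A$, $B_1 = J$, $A_2 = I$, $B_2 = B$, yields $x \in (A \cap I) \oop (J \cap B) = I \oop J$.
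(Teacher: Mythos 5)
Your proposal is correct and follows exactly the route the paper indicates for this corollary: apply Theorem \ref{slice} to the right slice maps to get $x \in A \widehat\otimes J$, then extend each $\psi \in J^*$ to $\tilde\psi \in B^*$ as in Proposition \ref{intersection}, verify by continuity and density on $A \otimes J$ that $L_\psi(x) = L_{\tilde\psi}(x) \in I$, and invoke the left-handed slice map property. Your alternative shortcut --- obtaining $x \in A \widehat\otimes J$ and $x \in I \widehat\otimes B$ from the two slice map properties and then citing Proposition \ref{intersection} with $A_1 = A$, $B_1 = J$, $A_2 = I$, $B_2 = B$ --- is equally valid and arguably cleaner, but it is the same circle of ideas rather than a genuinely different argument.
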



\section{\texorpdfstring{Ideal Structure for $A\widehat\otimes B$}{Ideal Structure for A oop B}}

This section deals with the structure of prime ideals, primitive
ideals and modular ideals of $A\widehat\otimes B$ which play an
important role in determining the structure of a Banach
$*$-algebra. In a Banach algebra a proper closed ideal $K$ is said to be 
{\it prime} if for any pair of closed ideals $I$ and $J$ satisfying
$IJ \subseteq K$, either $I \subseteq K$ or $J\subseteq K$. It is well known
that a proper  closed ideal $K$ of a $C^*$-algebra A is prime if and only if
for any pair of closed ideals $I$ and $J$ satisfying $I \cap J \subseteq
K$, either $I \subseteq K$ or $J\subseteq K$.  This property is also true
for $A\widehat\otimes B$ as can be explicitly observed from the
following result. The proof of the following result is largely
inspired by \cite{ass}.
\begin{theorem}
  A closed ideal $K$ in $A\widehat\otimes B$ is prime if and only if
  $K =A\widehat\otimes F + E\widehat\otimes B$ for some prime ideals
  $E$ and $F$ in $A$ and $B$ respectively.
\end{theorem}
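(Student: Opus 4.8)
The plan is to prove both implications by funnelling them, through the quotient machinery of Lemma~\ref{ker}, into a single statement about prime $C^*$-algebras. Writing $K_0 = A\widehat\otimes F + E\widehat\otimes B$, Lemma~\ref{ker} identifies $(A\widehat\otimes B)/K_0$ with $(A/E)\widehat\otimes(B/F)$ and provides an inclusion-preserving bijection between the closed ideals of $A\widehat\otimes B$ containing $K_0$ and the closed ideals of $(A/E)\widehat\otimes(B/F)$. Since this bijection is induced by an algebra homomorphism it carries products of ideals to products, so $K_0$ is prime exactly when $\{0\}$ is prime in $(A/E)\widehat\otimes(B/F)$, i.e.\ when the latter is a prime algebra. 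When $E$ and $F$ are prime, $A/E$ and $B/F$ are prime $C^*$-algebras, so everything rests on the following Key Lemma: \emph{if $A'$ and $B'$ are prime $C^*$-algebras, then $A'\widehat\otimes B'$ is prime}.

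For the ``if'' direction I would prove this Key Lemma through the canonical contractive injection $i:A'\widehat\otimes B'\to A'\otimes_{\min}B'$, which is one-to-one by \cite[Corollary 1]{jain}. Given closed ideals $I,J$ with $IJ=\{0\}$, the images $i(I),i(J)$ are ideals of the dense subalgebra $i(A'\widehat\otimes B')$ satisfying $i(I)\,i(J)=\{0\}$; passing to $\min$-closures and using joint continuity of multiplication gives $\overline{i(I)}\cdot\overline{i(J)}=\{0\}$ in $A'\otimes_{\min}B'$. As the minimal tensor product of two prime $C^*$-algebras is again prime, one of these closed ideals vanishes, and injectivity of $i$ forces $I=\{0\}$ or $J=\{0\}$. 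Combined with the reduction above, this shows that $A\widehat\otimes F+E\widehat\otimes B$ is prime whenever $E$ and $F$ are.

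For the ``only if'' direction I would recover $E$ and $F$ as the edge ideals $E=\{a\in A: a\otimes b\in K \text{ for all } b\in B\}$ and $F=\{b\in B: a\otimes b\in K\text{ for all }a\in A\}$. An approximate-unit argument shows these are closed two-sided ideals, and by construction $A\widehat\otimes F+E\widehat\otimes B\subseteq K$. To see that $F$, and symmetrically $E$, is prime, the point is that $E_1\widehat\otimes B\subseteq K$ holds if and only if $E_1\subseteq E$; hence if $E_1E_2\subseteq E$ then $(E_1\widehat\otimes B)(E_2\widehat\otimes B)\subseteq (E_1E_2)\widehat\otimes B\subseteq E\widehat\otimes B\subseteq K$, and primeness of $K$ yields $E_1\subseteq E$ or $E_2\subseteq E$, while properness of $K$ forces $E\neq A$. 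The remaining, and hardest, point is the reverse inclusion $K\subseteq A\widehat\otimes F+E\widehat\otimes B$.

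By the quotient correspondence this reverse inclusion is equivalent to the assertion that a nonzero prime closed ideal $\tilde K$ of $A'\widehat\otimes B'$, with $A',B'$ prime, cannot have both edge ideals trivial, and I expect this to be the main obstacle. It cannot be read off from the intersection formula of Proposition~\ref{intersection} alone: the slice maps only produce the associated ideals $\mathcal E=\overline{\mathrm{span}}\{L_\psi(x):x\in\tilde K,\ \psi\in B'^*\}$ and $\mathcal F=\overline{\mathrm{span}}\{R_\phi(x):x\in\tilde K,\ \phi\in A'^*\}$, from which one obtains only the \emph{outer} inclusion $\tilde K\subseteq\mathcal E\widehat\otimes\mathcal F$; the existence of such an enveloping product ideal is consistent with, rather than contradictory to, primeness, so the ideal-lattice bookkeeping does not by itself close the argument. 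Extracting the contradiction therefore requires the analytic structure of the factors: following \cite{ass}, I would represent $\tilde K$ faithfully via the embedding into the minimal tensor product together with faithful representations of $A'$ and $B'$, and then exploit primeness of $A'$ and $B'$ at the level of representations and commutants to show that triviality of both edge ideals forces $\tilde K=\{0\}$. This representation-theoretic step is where the real work lies.
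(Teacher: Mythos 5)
Your ``only if'' direction contains a genuine gap, and you have located it yourself: the reverse inclusion $K\subseteq A\widehat\otimes F+E\widehat\otimes B$ is never proved, only deferred to an unexecuted ``representation-theoretic step'' following \cite{ass}. Moreover, your diagnosis that ideal-lattice bookkeeping cannot close this step is mistaken. The paper closes it with exactly one citable structural input, namely \cite[Proposition 3.7]{ranj2}: \emph{every nonzero closed ideal of an operator space projective tensor product contains a nonzero elementary tensor}. Concretely, your edge ideals $E$ and $F$ are precisely the ideals maximal with respect to $A\widehat\otimes F+E\widehat\otimes B\subseteq K$; if $\theta(K)\neq 0$ in $(A/E)\widehat\otimes(B/F)$ (notation of Lemma \ref{ker}), it contains a nonzero elementary tensor $\pi(a)\otimes\rho(b)$ with $a\otimes b\in K$, so the product ideal $E_0\widehat\otimes F_0$ of the ideals generated by $a$ and $b$ lies in $K$. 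Setting $M=A\widehat\otimes(F+F_0)$ and $N=(E+E_0)\widehat\otimes B$, Proposition \ref{intersection} together with distributivity \cite[Proposition 3.6]{ranj2} gives
$$MN\subseteq M\cap N= E\widehat\otimes F+E\widehat\otimes F_0+E_0\widehat\otimes F+E_0\widehat\otimes F_0\subseteq K,$$
so primeness of $K$ (which your ``trivial edge ideals'' reformulation correctly retains --- without it the claim is false, as $K(H)\widehat\otimes K(H)$ inside $B(H)\widehat\otimes B(H)$ shows) forces $M\subseteq K$ or $N\subseteq K$, whence $F_0\subseteq F$ or $E_0\subseteq E$ by maximality, contradicting $\pi(a)\otimes\rho(b)\neq 0$. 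So the missing idea is not commutant analysis of faithful representations but the elementary-tensor lemma combined with the asymmetric product ideals $M$ and $N$; as written, the forward implication of the theorem is unproven. (Your identification of the edge ideals, their primeness, and the inclusion $A\widehat\otimes F+E\widehat\otimes B\subseteq K$ are all fine and agree in substance with the paper.)

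Your ``if'' direction, by contrast, is essentially correct and genuinely different from the paper's. The paper again argues with elementary tensors: it enlarges given ideals $I,J$ with $IJ\subseteq K$ to $M=\text{cl}(I+K)$, $N=\text{cl}(J+K)$, extracts product ideals $M_i\widehat\otimes N_i$ not contained in $K$ from their quotient images via \cite[Proposition 3.7]{ranj2}, and contradicts primeness of $E$ and $F$. You instead reduce via Lemma \ref{ker} to primeness of $(A/E)\widehat\otimes(B/F)$ and transport it from $(A/E)\otimes_{\min}(B/F)$ through the injective canonical map of \cite{jain}; this is clean and valid, but two points need attention. First, the correspondence of Lemma \ref{ker} covers only ideals containing $K_0=A\widehat\otimes F+E\widehat\otimes B$, so the reduction requires the same enlargement $I\mapsto\text{cl}(I+K_0)$ before quotienting (routine, but it should be said). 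Second, the assertion that the minimal tensor product of prime $C^*$-algebras is prime is not in the paper's toolkit and must be proved or cited; it is true, but its standard proof rests on the elementary-tensor lemma for $\otimes_{\min}$ --- the very kind of fact whose $\widehat\otimes$-analogue would also have rescued your forward direction.
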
 

\begin{proof}
  Let $K$ be a closed prime ideal. We can choose closed ideals $E$ and
  $ F$ in $A$ and $B$ which are maximal with respect to the property
  $A\widehat\otimes F + E\widehat\otimes B \subseteq K$. Now consider
  the quotient maps $\pi:A \rightarrow A/E$ and $\rho:B\rightarrow
  B/F$. Since $\ker (\pi \otimes \rho) \subseteq K$, by Lemma \ref{ker}, $(\pi \otimes \rho)(K)$ is a closed ideal of $A/E
  \widehat\otimes B/F$. We claim that $(\pi \otimes \rho)(K) =0$; this
  would imply $K =A\widehat\otimes F + E\widehat\otimes B$. If possible, let the ideal $(\pi \otimes \rho)(K)$ be
  non-zero. Then, it must contain a non-zero elementary tensor, say,
  $\pi(a) \otimes \rho(b)$, where $a\otimes b \in K$ \cite[Proposition
  3.7]{ranj2}. Let $E_0$ and $F_0$ be the closed ideals generated by
  $a$ and $b$ respectively. Then, the product ideal $E_0
  \widehat\otimes F_0$ is contained in K. Now, consider the product ideals $M=
  A\widehat\otimes (F+ F_0)$ and $N= (E + E_0) \widehat\otimes
  B$. Using Proposition \ref{intersection} and \cite[Proposition
  3.6]{ranj2}, we have
$$ 
MN \subseteq M \cap N = E \widehat\otimes F + E\widehat\otimes F_0 +
E_0\widehat\otimes F + E_0 \widehat\otimes F_0 .
$$
It is clear that $MN \subseteq K$, so that either $M\subseteq K$ or
$N\subseteq K$. Using the maximality property of $E$ and $F$, we have
either $E_0 \subseteq E $ or $F_0 \subseteq F$. Thus, either $\pi (a) =0$
or $ \rho (b) = 0$ contradicting the fact that $(\pi \otimes
\rho)(a\otimes b) \neq 0$.

Next we prove that $E$ and $F$ are prime ideals. Note that $E$
and $F$ both are proper ideals, $K$ being proper. Let $I \cap J \subseteq
E$ for some closed ideals $I$ and $ J$ of $A$. Then, $(I \widehat\otimes B)(J \widehat\otimes B)
\subseteq (I \widehat\otimes B) \cap (J \widehat\otimes B) \subseteq K$;
so, either $I \widehat\otimes B \subseteq K$ or $J \widehat\otimes B
\subseteq K$. Without loss of generality, let $I \widehat\otimes
B\subseteq K$. Consider any $\phi \in E^{\perp} \subseteq A^*$ and $0 \neq
\psi \in F^{\perp}$. Then, $(\phi \otimes \psi)(K) = 0$ which further
gives $(\phi \otimes \psi)(I\widehat\otimes B) = 0$. Since this is
true for any $\phi \in E^\perp$, we must have $I \subseteq E$. Thus, $E$
is prime and by a similar argument $F$ is also prime. 

For the converse, let us assume that $K =A\widehat\otimes F +
E\widehat\otimes B$ for some prime ideals $E$ and $F$ in $A$ and $B$
respectively. Let $IJ \subseteq K$ for some closed ideals $I$ and $J$ of
$A\widehat\otimes B$. Define the closed ideals $M$ and $N$ as
$$ 
M = \text{cl} (I +K)\ \text{and} \ N = \text{cl} (J +K).
$$
Then $K \subseteq M, \, K \subseteq N$ and $MN \subseteq K$. We claim that
either $M= K$ or $ N = K$, which further implies that either $I
\subseteq K$ or $J \subseteq K$. Suppose, on the contrary, that both the
containments $K \subseteq M$ and $K \subseteq N$ are strict. We now claim
that $M$ contains a product ideal $M_1 \widehat\otimes N_1$ which is
not contained in $K$. As done previously, since $K \subsetneq M,\,
(\pi \otimes \rho) (M)$ is a non-zero closed ideal of $A/E
\widehat\otimes B/F$ with $(\pi \otimes \rho)^{-1} ((\pi \otimes \rho
)(M))= M$. So, $(\pi \otimes \rho) (M)$ contains a non-zero elementary
tensor say $\pi(a) \otimes \rho(b)$. Define $M_1$ and $N_1$ to be the
closed ideals generated by $a$ and $b$. Then $M_1\widehat\otimes N_1$
is contained in $M$ but not in $K$. Similarly, $N$ contains a product
ideal $M_2 \widehat\otimes N_2$ which is not contained in $K$. By
routine calculations, it is easily seen that
$$
M_1M_2 \widehat\otimes N_1N_2= \text{cl}((M_1 \widehat\otimes N_1)(M_2
\widehat\otimes N_2)) \subseteq \text{cl}(MN) \subseteq K,
$$
which further gives
$$ 
\pi (M_1M_2)\otimes \rho(N_1N_2) \subseteq (\pi \otimes
\rho)(M_1M_2\widehat\otimes N_1N_2)=\{0\}.
$$ 
So either $M_1M_2 \subseteq \ker \pi = E$ or $N_1N_2 \subseteq
\ker \rho = F$. Now, both $E$ and $F$ are prime, so at least one of
the following containments must hold:
$$
M_1\subseteq E,\,\, M_2\subseteq E,\,\,N_1\subseteq F,\,\,N_2\subseteq F.
$$ 
In all these cases, either $M_1\widehat\otimes N_1 $ or
$M_2\widehat\otimes N_2$ is contained in $K$, which is a
contradiction. Thus, $K$ is prime.
\end{proof}

A closed ideal $I$ of a Banach $\ast$-algebra $E$ is said to be {\it primitive}
if it is the kernel of an irreducible $\ast$-representation of $E$ on some Hilbert space. The following gives a characterization of the
primitive ideals of $A\widehat\otimes B$.

\begin{theorem}\label{prim}
For  $C^*$-algebras $A$ and $B$, we have the following:
\begin{enumerate}
\item If $E$ and $F$ are primitive ideals of $A$ and $B$ respectively,
  then $A\widehat\otimes F + E\widehat\otimes B$ is also a primitive
  ideal of $A\widehat\otimes B$.
\item If $K$ is a primitive ideal of $A\widehat\otimes B$, then $K=
  A\widehat\otimes F + E\widehat\otimes B$ for some prime ideals $E$
  and $F$ of $A$ and $B$, respectively.
\item If $A$ and $B$ are separable, then $K$ is primitive if and only
  if $K= A\widehat\otimes F + E\widehat\otimes B$ for some primitive
  ideals $E$ and $F$ of $A$ and $B$, respectively.
\end{enumerate}
\end{theorem}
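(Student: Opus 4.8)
The plan is to realize the ideal $A\oop F + E\oop B$ as the kernel of a concrete irreducible representation assembled from irreducible representations of $A$ and $B$, and then to push the remaining two parts through the prime-ideal characterization established above.

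For part (1), write $E=\ker\pi_A$ and $F=\ker\pi_B$ for irreducible $*$-representations $\pi_A:A\ra B(H)$ and $\pi_B:B\ra B(K)$. Since $\|\cdot\|_{\min}\leq\|\cdot\|_{_\wedge}$, the homomorphism $\pi_A\otimes\pi_B$ on $A\otimes B$ is $\wedge$-contractive, so composing with the canonical map $i:A\oop B\ra A\otimes_{\min}B$ gives a continuous $*$-representation $\sigma:=(\pi_A\otimes\pi_B)\circ i$ of $A\oop B$ on $H\otimes K$. Its image contains $\mathrm{span}\{\pi_A(a)\otimes\pi_B(b)\}$, which acts irreducibly because the tensor product of irreducible representations of $C^*$-algebras is again irreducible (its commutant reduces to $\mathbb{C}I$ by the commutation theorem); hence $\sigma$ is irreducible. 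It remains to identify $\ker\sigma$. One inclusion is immediate: $\sigma$ annihilates every elementary tensor $a\otimes f$ with $f\in F$ and $e\otimes b$ with $e\in E$, so by continuity $A\oop F+E\oop B\subseteq\ker\sigma$. For the reverse inclusion I would factor $\sigma$ through the quotient map $\pi\oop\rho:A\oop B\ra (A/E)\oop(B/F)$ of Lemma \ref{ker}, whose kernel is exactly $Z:=A\oop F+E\oop B$; writing $\sigma=\bar\sigma\circ(\pi\oop\rho)$, Lemma \ref{ker}(2) reduces the claim $\ker\sigma=Z$ to the injectivity of $\bar\sigma$. But $\bar\sigma=(\bar\pi_A\otimes\bar\pi_B)\circ\bar\imath$, where $\bar\pi_A,\bar\pi_B$ are the induced faithful irreducible representations of $A/E,B/F$ and $\bar\imath$ is the canonical map into the minimal tensor product; here $\bar\imath$ is injective \cite{jain} and $\bar\pi_A\otimes\bar\pi_B$ is faithful on $(A/E)\otimes_{\min}(B/F)$ \cite[Theorem IV.4.9]{takesaki}, exactly as in Lemma \ref{total}. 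Thus $\bar\sigma$ is injective, $\ker\sigma=A\oop F+E\oop B$, and this ideal is primitive.

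For part (2) the key observation is that a primitive ideal is automatically prime. Indeed, if $K=\ker\sigma$ for an irreducible $*$-representation $\sigma$ on $\mathcal H$ and $IJ\subseteq K$ for closed ideals $I,J$, then $\sigma(i)\sigma(j)=\sigma(ij)=0$ for all $i\in I$, $j\in J$. The closed subspace $\overline{\sigma(J)\mathcal H}$ is $\sigma(A\oop B)$-invariant, since $\sigma(x)\sigma(j)\xi=\sigma(xj)\xi$ with $xj\in J$; hence it is $0$ or $\mathcal H$. In the first case $\sigma(J)=0$, so $J\subseteq K$; in the second, $\sigma(i)\sigma(j)=0$ together with the density of $\sigma(J)\mathcal H$ forces $\sigma(i)=0$ for every $i\in I$, so $I\subseteq K$. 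Thus $K$ is prime, and the preceding theorem yields $K=A\oop F+E\oop B$ for prime ideals $E,F$ of $A$ and $B$.

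Part (3) is then a synthesis. If $A$ and $B$ are separable, every prime ideal of a $C^*$-algebra is primitive, so the ideals $E,F$ produced in part (2) are in fact primitive, giving the forward implication; the reverse implication is precisely part (1), which needs no separability. The step I expect to be most delicate is the identification $\ker\sigma=A\oop F+E\oop B$ in part (1): it is exactly here that one must pass to the quotient and lean on both the injectivity of $A\oop B\ra A\otimes_{\min}B$ and the faithfulness of the minimal tensor product of faithful representations --- the same two inputs that drove Lemma \ref{total} --- rather than attempting a direct computation in the projective norm.
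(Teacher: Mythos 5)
Your proof is correct, but it takes a genuinely different route from the paper's in two places. In part (1), the paper establishes $\ker\pi \subseteq A\oop F + E\oop B$ by applying Lemma \ref{ker}(2) to the quotient map $\theta: A\oop B \ra (A/E)\oop (B/F)$ and then invoking the structure result that every non-zero closed ideal of the quotient tensor product contains a non-zero elementary tensor \cite[Proposition 3.7]{ranj2}; you instead prove outright that the induced representation $\bar\sigma$ of $(A/E)\oop(B/F)$ is injective, combining the injectivity of the canonical map into the minimal tensor product \cite{jain} with the faithfulness of the minimal tensor product of faithful representations \cite[Theorem IV.4.9]{takesaki}. Both arguments are sound, and your two inputs are exactly the ones driving Lemma \ref{total}, so no new machinery is needed; note, though, that once you have $\sigma = \bar\sigma \circ (\pi\oop\rho)$ with $\bar\sigma$ injective, Lemma \ref{ker}(1) alone gives $\ker\sigma = \ker(\pi\oop\rho)$, so your appeal to Lemma \ref{ker}(2) is superfluous. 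The larger divergence is part (2): the paper decomposes the irreducible representation into commuting representations $\pi_1, \pi_2$ via \cite[Lemma IV.4.1]{takesaki}, verifies that these are factor representations so that their kernels are prime \cite[II.6.1.11]{blackadar}, and identifies $K$ with $A\oop F + E\oop B$ using \cite[Proposition IV.4.20]{takesaki}, whereas you observe that a primitive ideal is automatically prime --- your invariant-subspace argument is complete and valid for topologically irreducible $*$-representations --- and then quote the prime-ideal characterization proved immediately before Theorem \ref{prim}. Your route is shorter and eliminates the von Neumann algebra machinery entirely; the paper's route buys extra structural information, namely the explicit commuting factor decomposition of the representation and the identification $E = \ker\pi_1$, $F = \ker\pi_2$, which your argument does not produce. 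Part (3) is handled identically in both treatments.
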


\begin{proof}
  (1) Since $E$ and $F$ are primitive ideals, there exist irreducible
  $\ast$- representations $\pi_1: A \rightarrow B(H_1)$ and $\pi_2:B
  \rightarrow B(H_2)$ such that $E=\ker \pi_1$ and $F=\ker
  \pi_2$. Define $\pi: A\otimes B \rightarrow B(H_1 \otimes H_2)$ by
$$ 
\pi(a\otimes b) = \pi_1 (a)\otimes \pi_2(b). 
$$
Then, by the definition of min-norm \cite{takesaki}, $\pi$ is bounded with respect to the min-norm and hence the `$\wedge$' norm; so, $\pi$ can
be extended to $A\widehat\otimes B$ as a bounded
$\ast$-representation. We first claim that $\pi$ is irreducible,
equivalently, $\pi(A\widehat\otimes B)^{\prime} =
\mathbb{C}I$. 
Since $\pi(A \widehat\otimes B) \supset \pi_1(A) {\otimes} \pi_2(B)$, we have $\pi(A\widehat\otimes B)^{\prime} \subseteq
(\pi_1(A) \overline{\otimes} \pi_2(B))^{\prime}$, where $\overline{\otimes}$ denotes the weak closure. Further, $\pi_1$ and $\pi_2$ being irreducible, $\pi_1(A)$ and $\pi_2(B)$ are non-degenerate $\ast$-subalgebras of $B(H_1)$ and $B(H_2)$, respectively; so that, by Double Commutant Theorem, $\pi_1 (A)$ and $\pi_2(B)$ are weakly dense in $\pi_1(A)''$ and $\pi_2(B)''$. In particular,  $ \pi_1(A) \overline{\otimes} \pi_2(B) = \pi_1(A)''\, \overline{\otimes} \pi_2(B)''$; and, an appeal to Tomita's Commutation Theorem then yields   $ (\pi_1(A) \overline{\otimes} \pi_2(B))^{\prime}= \pi_1(A)^{\prime}
\,\overline{\otimes} \pi_2(B)^{\prime} \subseteq \mathbb{C}I$, which shows that $\pi$ is irreducible.
\vspace*{1mm}

Next we claim that $\ker \pi =A \widehat\otimes F + E \widehat\otimes
B =K (\text{say})$. Clearly, $A\widehat\otimes F$ and $E\widehat\otimes
B$ are both contained in $\ker \pi$; so that $K \subseteq \ker \pi$. For
the other containment, consider the quotient map $\theta:A
\widehat\otimes B \rightarrow A/E \widehat\otimes B/F$ with $\ker
\theta = K$. Since, $\ker\pi$ contains
$\ker \theta$, by Lemma \ref{ker}, $\theta(\ker \pi)$ is a closed ideal of $A/E
\widehat\otimes B/F$ with $\theta^{-1}(\theta(\ker \pi))=\ker \pi$. If
$\theta(\ker \pi) \neq 0$, then it must contain a non-zero elementary
tensor say $(a+E) \otimes (b+F)$ \cite[Proposition 3.7]{ranj2}. Now
$a\otimes b \in \ker \pi$ implies $\pi_1(a) \otimes \pi_2(b) = 0$,
which further implies that either $a\in E$ or $b \in F$, so that
$(a+E) \otimes (b+F) = 0$, which is a contradiction. Thus, $\ker \pi
\subseteq \ker \theta =K$.

\vspace*{2mm}

(2) Let $K= \ker \pi$ for some irreducible $\ast$-representation $\pi$
of $A\widehat\otimes B$ on $H$. By \cite[Lemma IV.4.1]{takesaki}, there
exist commuting $\ast$-representations $\pi_1:A \rightarrow B(H)$ and
$\pi_2:B\rightarrow B(H)$ such that 
$$
\pi(a\otimes b) =\pi_1(a) \pi_2(b),\, \forall\, a\in A, \, b \in B.
$$
Now, $\pi(A\otimes B)= \pi_1(A) \pi_2(B)$, so $ \pi(A\widehat\otimes
B) \subseteq \text{cl}(\pi_1(A) \pi_2(B))$. Thus, we obtain
$$
(\pi_1(A) \pi_2(B))^{'}=\text{cl}(\pi_1(A) \pi_2(B))^{'} \subseteq
\pi(A\widehat \otimes B)^{'} = \mathbb{C} I.
$$ 
Also, note that $\pi_1$ and $\pi_2$ are both factor representations as
for $P= \pi_1(A)^{''}$ and $ Q=\pi_2(B)^{''}$, we have
\begin{eqnarray*}
  P \cap P^{'} & = & \pi_1(A)^{''} \cap \pi_1(A)^{'}\\
  &=& (\pi_1(A)^{'} \cup \pi_1(A))^{'} \\
  &\subseteq & (\pi_2(B) \cup \pi_1(A))^{'} \, \qquad \qquad
  ( \text{as} \, \pi_1(A)\, \text{and} \, \pi_2(B)\, \text{commute})\\
  & \subseteq & \{\pi_1(A) \pi_2(B) \}^{'}\\
  &=& \mathbb{C} I.
\end{eqnarray*}
Now, let $E=\ker \pi_1$ and $F=\ker \pi_2$. Then $E$ and $F$, being
kernels of factor representations, are both prime ideals  
\cite[II.6.1.11]{blackadar}. Also, by the definition of $\pi$,
$A\widehat\otimes F + E\widehat\otimes B \subseteq K$. For the reverse
containment, consider $a\otimes b\in K$. Then, we have
$\pi_1(a)\pi_2(b)=0$. Since $\pi_1(A)^{''}$ is a factor and
$\pi_2(B)^{''} \subseteq \pi_1(A)^{'}$, using \cite[Proposition
IV.4.20]{takesaki}, we see that either $\pi_1(a) =0$ or $\pi_2(b) = 0$,
i.e., $a\otimes b$ belongs to either $A\widehat\otimes F$ or
$E\widehat\otimes B$. In both cases, $a\otimes b \in A\widehat\otimes
F + E\widehat\otimes B$. Finally, exactly on the lines of (1), we
conclude that $K \subseteq A\widehat\otimes F + E\widehat\otimes B$.

(3) If $A$ and $B$ are separable, then every prime ideal is a
primitive ideal. So, the result follows from parts (1) and (2).
\end{proof}
In particular, among all the five proper closed ideals of $B(H)\widehat\otimes B(H)$ - see \cite[Theorem 3.12]{ranj2}- namely, $\{0\},\,B(H) \widehat\otimes K(H),\, K(H) \widehat\otimes B(H), \, B(H) \widehat\otimes K(H)+ K(H) \widehat\otimes B(H)$ and $K(H) \widehat\otimes K(H)$,   the first four are prime as well primitive.

We now discuss the modular ideals of $A\widehat\otimes B$. In a Banach
algebra $A$, an ideal $I$ is said to be {\it modular} (or regular) if there
exists an $e \in A$ such that $xe-x, ex-x \in I$ for all $x\in A$, or
equivalently, if $A/I$ is unital. It is clear that every proper ideal in a
unital Banach algebra is modular. Also, $\{0\}$ is modular if and only
if $A$ is unital.

If $I$ is a closed modular ideal of $A$, then the product ideal $I \widehat\otimes A$ need not be modular in
$A\widehat\otimes A$. This can be seen by considering $A= C_0 (X)$,
where $X$ is a locally compact Hausdorff space (non-compact). A closed
modular ideal of $C_0(X)$ is of the form $I(E)= \{f \in A : f(E)=0\}$,
where $E$ is a compact subset of $X$ \cite{kan}. So let us consider a
closed modular ideal $I= I(E)$ of $A$. Now note that
$$ 
I\widehat\otimes A \subseteq A \widehat\otimes A \subseteq A\otimes_\lambda A = C_0(X \times X),
$$
where `$\lambda$' is the Banach space injective tensor product. This shows that $I
\widehat\otimes A \subseteq I(E\times X) $. Thus, $I\widehat\otimes A$
is not modular, $I(E \times X)$ not being modular. In fact, we have the
following result which characterizes the modular product ideals.

\begin{theorem}
  For closed modular ideals $I$ and $J$ of $A$ and $B$ respectively,
  $I\widehat\otimes J$ is modular in $A\widehat\otimes B$ if and only
  if both $A$ and $B$ are unital.
\end{theorem}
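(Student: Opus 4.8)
The plan is to deduce both directions from one clean statement: \emph{for $C^*$-algebras $C$ and $D$ with $D\neq 0$, if $C\oop D$ is unital then $C$ is unital} (and symmetrically for $D$). Granting this, the theorem follows quickly. For the ``if'' direction, if $A$ and $B$ carry units $1_A,1_B$, then $1_A\otimes 1_B$ is a two-sided identity on the dense subalgebra $A\otimes B$ and hence, by joint continuity of the product, the identity of $A\oop B$; thus $A\oop B$ is unital, every closed ideal has a unital quotient, and in particular $I\oop J$ is modular.

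For the auxiliary statement I would argue through the left slice maps. Let $w$ be the identity of $C\oop D$. For $\psi\in D^*$ and $d\in D$ set $\psi_d:=\psi(\,\cdot\,d)\in D^*$; a direct computation on elementary tensors gives $L_\psi\bigl(x(c\otimes d)\bigr)=L_{\psi_d}(x)\,c$ for $x\in C\otimes D$, and this persists on all of $C\oop D$ by continuity of $L_\psi$ and of right multiplication. Applying $L_\psi$ to the identity $w(c\otimes d)=c\otimes d$ then yields $L_{\psi_d}(w)\,c=\psi(d)\,c$ for every $c\in C$. Choosing $\psi$ and $d$ with $\psi(d)\neq 0$ (possible since $D\neq 0$), the element $L_{\psi_d}(w)/\psi(d)$ is a left identity of $C$; since a $C^*$-algebra possessing a one-sided identity is automatically unital, $C$ is unital.

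For the ``only if'' direction, suppose $I\oop J$ is modular, so that $(A\oop B)/(I\oop J)$ is unital; as $I$ and $J$ are proper, $A/I$ and $B/J$ are nonzero. Because $I\oop J\subseteq A\oop J$, Lemma \ref{ker} (applied with the zero ideal of $A$) identifies $(A\oop B)/(A\oop J)$ with $A\oop(B/J)$, and the canonical map realizes $A\oop(B/J)$ as a quotient Banach algebra of the unital algebra $(A\oop B)/(I\oop J)$; hence $A\oop(B/J)$ is unital. The auxiliary statement, with $C=A$ and $D=B/J\neq 0$, now forces $A$ to be unital. Running the identical argument through $(A\oop B)/(I\oop B)\cong (A/I)\oop B$ gives that $B$ is unital.

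I expect the auxiliary statement to be the crux: the issue is to pass from a \emph{global} identity on $C\oop D$ to an honest identity of the factor $C$, which the slice-map identity handles once it is combined with the $C^*$-fact that a one-sided identity is two-sided. The only reason the quotient maps of Lemma \ref{ker} enter is to ensure that the factors appearing are the genuine algebras $A$ and $B$ rather than the quotients $A/I$ and $B/J$ that a naive slice-map computation applied directly to a modular unit of $I\oop J$ would produce.
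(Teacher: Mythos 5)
Your proof is correct, and its overall skeleton matches the paper's: both arguments dispose of the ``if'' direction by noting that $A\oop B$ is unital when $A$ and $B$ are, and both run the ``only if'' direction through Lemma \ref{ker}, reducing modularity of $I\oop J$ to unitality of $A\oop(B/J)$ and $(A/I)\oop B$ (the paper phrases this as ``an ideal containing a modular ideal is modular,'' applied to $A\oop J$ and $I\oop B$; your observation that these quotients are homomorphic images of the unital algebra $(A\oop B)/(I\oop J)$ is the same point in different words). The one genuine divergence is the final implication, that unitality of a tensor product forces unitality of the factors: the paper simply cites \cite[Theorem 1]{loy}, whereas you prove it internally via left slice maps. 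I checked your computation: $L_\psi\bigl(x(c\otimes d)\bigr)=L_{\psi_d}(x)\,c$ with $\psi_d=\psi(\cdot\,d)$ holds on elementary tensors and extends by continuity (boundedness of the slice maps and submultiplicativity of $\|\cdot\|_{_\wedge}$), so applying it to the unit $w$ and normalizing $\psi(d)=1$ produces a left identity $e=L_{\psi_d}(w)$ of $C$; taking adjoints makes $e^*$ a right identity, whence $e=ee^*=e^*$ is a two-sided unit. This trade-off is worth noting: your argument is self-contained and uses only the slice-map machinery the paper already develops in Section 2, but it leans on the $C^*$-structure of the factors (a one-sided identity in a $C^*$-algebra is an identity), which is legitimate here since every algebra in play --- $A$, $A/I$, $B$, $B/J$ --- is a $C^*$-algebra; Loy's theorem, by contrast, is a general Banach-algebra statement that the paper reuses in the subsequent theorem on maximal modular ideals. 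You also make explicit a hypothesis the paper leaves tacit, namely that $I$ and $J$ are proper (so that $B/J\neq 0$ and $A/I\neq 0$), which is genuinely needed: the whole algebra is trivially a modular ideal of itself, so the statement is only true with proper ideals understood.
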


\begin{proof}
  If $A$ and $B$ are both unital, then so is $A\widehat\otimes B$; so
  that every ideal is modular. Conversely, let $I \widehat\otimes J$
  be a modular ideal. Since $ A\widehat\otimes J$ and $ I
  \widehat\otimes B$ both contain $I\widehat\otimes J$, both are
  modular ideals of $A \widehat\otimes B$. Using Lemma \ref{ker}, we have an
 isomorphism between $(A\widehat\otimes B) / (A  \widehat\otimes J) $ and $A \widehat\otimes (B/J)$, and similarly between  $(A\widehat\otimes B) / (I  \widehat\otimes B) $ and $(A/I)\widehat\otimes B$. Therefore, $A \widehat\otimes (B/J)$ and $(A/I)\widehat\otimes B$  are unital which further show that $A$ and $B$ are both unital \cite[Theorem 1]{loy}.
\end{proof}

In particular, $K(H) \widehat\otimes K(H)$ is a closed modular ideal
of $B(H) \widehat\otimes B(H)$, but it is not modular in $B(H)
\widehat\otimes K(H)$. However, the maximal modular ideals behave well in 
$A\widehat\otimes B$ as can be seen in the following result:

\begin{theorem}
  A closed ideal $K$ of $A\widehat\otimes B$ is maximal modular if and
  only if $K=A \widehat\otimes N + M \widehat\otimes B$ for some
  maximal modular ideals $M$ and $N$ of $A$ and $B$, respectively.
\end{theorem}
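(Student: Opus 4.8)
The plan is to mirror the structure already established for prime and primitive ideals, proving each implication separately. For the forward direction, suppose $K$ is a maximal modular ideal of $A\widehat\otimes B$. Since every maximal modular ideal is in particular prime, the theorem characterizing prime ideals gives $K = A\widehat\otimes F + E\widehat\otimes B$ for some prime ideals $E$ and $F$ of $A$ and $B$. What remains is to upgrade $E$ and $F$ to maximal modular ideals. The key tool will again be Lemma \ref{ker}: the quotient map $\pi\widehat\otimes\rho:A\widehat\otimes B\to(A/E)\widehat\otimes(B/F)$ has kernel exactly $K$, so $(A\widehat\otimes B)/K\cong(A/E)\widehat\otimes(B/F)$. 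Since $K$ is modular, the quotient $(A/E)\widehat\otimes(B/F)$ is unital, and by \cite[Theorem 1]{loy} (as used in the previous theorem) both $A/E$ and $B/F$ must then be unital, showing $E$ and $F$ are modular. To see they are maximal, I would use the correspondence in Lemma \ref{ker}(2): any closed ideal strictly between $E$ and $A$ would, via $\widehat\otimes B$, produce a closed ideal strictly between $K$ and $A\widehat\otimes B$, contradicting maximality of $K$.

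For the converse, assume $K = A\widehat\otimes N + M\widehat\otimes B$ with $M,N$ maximal modular. Maximal modular ideals of $C^*$-algebras are primitive (being kernels of irreducible representations onto $\mathbb{C}$, or more generally maximal ideals are prime), so in particular $M$ and $N$ are prime, and the prime-ideal theorem tells us $K$ is at least prime, hence proper. Modularity of $K$ follows the same route as above in reverse: since $A/M$ and $B/N$ are unital (indeed, maximal modular ideals have unital quotients), $(A/M)\widehat\otimes(B/N)$ is unital, and this quotient is isomorphic to $(A\widehat\otimes B)/K$ by Lemma \ref{ker}, so $K$ is modular. The maximality of $K$ among modular ideals is the substantive part: I would suppose $K\subsetneq L$ for some proper modular ideal $L$ and derive a contradiction by pushing $L$ through $\pi\widehat\otimes\rho$ to obtain a nonzero proper closed ideal of the unital algebra $(A/M)\widehat\otimes(B/N)$, then extracting a nonzero elementary tensor $\pi(a)\otimes\rho(b)$ via \cite[Proposition 3.7]{ranj2} and generating product ideals that, by maximality of $M$ and $N$, force $a\in M$ or $b\in N$, contradicting $\pi(a)\otimes\rho(b)\ne0$.

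I expect the main obstacle to be the maximality argument in the converse direction, specifically controlling closed ideals of $(A/M)\widehat\otimes(B/N)$. The difficulty is that even when $A/M$ and $B/N$ are simple unital $C^*$-algebras, one must verify that $(A/M)\widehat\otimes(B/N)$ has no nontrivial proper closed ideals, i.e. is simple as a Banach $\ast$-algebra. This should follow by combining the elementary-tensor extraction with the slice map machinery of Theorem \ref{slice}: any nonzero closed ideal contains a nonzero elementary tensor, whose generated product ideal is all of $(A/M)\widehat\otimes(B/N)$ by simplicity of the factors, forcing the ideal to be the whole algebra. Assembling these pieces carefully — and ensuring the modularity and maximality conditions transfer cleanly through the isomorphism $(A\widehat\otimes B)/K\cong(A/M)\widehat\otimes(B/N)$ — is where the care is needed, though each individual step reuses results already available in the paper.
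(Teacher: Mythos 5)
Your proposal is correct in substance, but it takes a genuinely different and more self-contained route than the paper. The paper's proof is very short: it observes that every maximal modular ideal is a maximal ideal, invokes the pre-existing characterization of maximal closed ideals of $A\widehat\otimes B$ from \cite[Theorem 3.11]{ranj2} (which already says $K = A\widehat\otimes N + M\widehat\otimes B$ with $M$, $N$ maximal, in both directions), and then handles only the modularity bookkeeping exactly as you do, via the isomorphism $(A\widehat\otimes B)/K \cong (A/M)\widehat\otimes (B/N)$ from Lemma \ref{ker} together with \cite[Theorem 1]{loy}. You instead re-derive the maximality transfer by hand: in the forward direction you route through the prime-ideal theorem and then upgrade $E$, $F$ by producing an intermediate closed ideal $A\widehat\otimes F + E'\widehat\otimes B$ (which works --- strictness follows since $a\otimes b \in \ker(\pi\widehat\otimes\rho)$ forces $a\in E$ or $b\in F$); in the converse you prove that $(A/M)\widehat\otimes (B/N)$ is simple as a Banach $\ast$-algebra via the elementary-tensor extraction of \cite[Proposition 3.7]{ranj2} and simplicity of the $C^*$-quotients, then pull back along Lemma \ref{ker}(2). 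Your version buys independence from \cite[Theorem 3.11]{ranj2} (essentially re-proving the relevant part of it) and yields the simplicity of $(A/M)\widehat\otimes(B/N)$ as a byproduct; the paper's version buys brevity. Two small points you should still make explicit: your opening claim that a maximal modular ideal is prime needs a line of justification (any proper ideal containing a modular ideal is modular and the closure of a proper modular ideal is proper, so $K$ is a maximal closed ideal; then if $IJ\subseteq K$ with $I,J\not\subseteq K$, the images of $I$ and $J$ in the simple unital quotient are dense ideals, hence everything, contradicting $IJ\subseteq K$); and your parenthetical that maximal modular ideals of $C^*$-algebras are kernels of irreducible representations onto $\mathbb{C}$ is only accurate in the commutative case, though your argument uses only primeness, so this is harmless.
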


\begin{proof}
  Let $K$ be a maximal modular ideal of $A\widehat\otimes B$. Since
  every maximal modular ideal is also a maximal ideal, $K$ is of the
  form $K= A\widehat\otimes N + M\widehat\otimes B$ for some maximal
  ideals $M$ and $N$ of $A$ and $B$ respectively \cite[Theorem
  3.11]{ranj2}. Now $(A\widehat\otimes B)/K$ is unital and is
  isomorphic to $A/M\widehat\otimes B/N$, by Lemma \ref{ker} ; therefore, the latter space is unital. But this implies
  that $A/M$ and $B/N$ are both unital \cite[Theorem 1]{loy}. Thus, $M$
  and $N$ are also modular ideals of $A$ and $B$ respectively.

  For the converse, let $K= A\widehat\otimes N + M\widehat\otimes B$,
  where $M$ and $N$ are maximal modular ideals of $A$ and $B$
  respectively. Then, $M$ and $N$ being maximal, by \cite[Theorem
  3.11]{ranj2}, $K$ is also a maximal ideal. Also, the facts that
  $(A\widehat\otimes B)/K$ and $A/M\widehat\otimes B/N$ are
  isomorphic, and $A/M$ and that $B/N$ are both unital, together imply
  that $A\widehat\otimes B/K$ is unital, so that $K$ is modular.
\end{proof}


\section{Wiener Property and Symmetry}
A Banach $\ast$-algebra is said to have {\it Wiener property} if every
proper closed two-sided ideal is annihilated by an irreducible
$\ast$-representation \cite{palmer}. Wiener property for group
algebras and the weighted group algebras has been studied in
\cite{kumar2,ludwig} and others. It is well known that every $C^*$-algebra has
Wiener property.

\begin{theorem}
  The Banach $\ast$-algebra  $A\widehat\otimes B$ has Wiener property.
\end{theorem}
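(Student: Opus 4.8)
The plan is to identify the irreducible $*$-representations of $A\widehat\otimes B$ with those of a suitable $C^*$-completion and then invoke the Wiener property of $C^*$-algebras. As in the proof of Theorem \ref{prim}(2), every $*$-representation $\pi$ of $A\widehat\otimes B$ splits, via \cite[Lemma IV.4.1]{takesaki}, into a pair of commuting $*$-representations $\pi_1$ of $A$ and $\pi_2$ of $B$ with $\pi(a\otimes b)=\pi_1(a)\pi_2(b)$. Since commuting pairs of $*$-representations of $A$ and $B$ are precisely the $*$-representations of the maximal $C^*$-tensor product $A\otimes_{\max}B$, the $*$-representations of $A\widehat\otimes B$ correspond bijectively, and compatibly with the canonical $*$-homomorphism $q:A\widehat\otimes B\to A\otimes_{\max}B$ of dense range, to those of $A\otimes_{\max}B$; this correspondence preserves irreducibility, because $q$ has dense range and hence $\pi(A\widehat\otimes B)'=\rho(A\otimes_{\max}B)'$ whenever $\pi=\rho\circ q$. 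Moreover $q$ is injective: the minimal $C^*$-norm is dominated by the maximal one and is already a norm on $A\widehat\otimes B$ by the injectivity of $i$ used in Lemma \ref{total}.

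With this dictionary in place, let $K$ be a proper closed two-sided ideal of $A\widehat\otimes B$ and consider the closed two-sided ideal $\overline{q(K)}$ of the $C^*$-algebra $A\otimes_{\max}B$. If $\overline{q(K)}$ is proper then, since every $C^*$-algebra has the Wiener property, there is an irreducible representation $\rho$ of $A\otimes_{\max}B$ with $\overline{q(K)}\subseteq\ker\rho$; the representation $\rho\circ q$ is then an irreducible $*$-representation of $A\widehat\otimes B$ annihilating $K$, which is exactly what the Wiener property demands. Thus the theorem reduces to the single assertion that the canonical image of a proper closed ideal remains proper, that is, that $q(K)$ is never dense in $A\otimes_{\max}B$.

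I expect this last step to be the crux of the argument, precisely because the operator space projective norm is strictly finer than the maximal $C^*$-norm, so that a priori a $\|\cdot\|_{_{\wedge}}$-closed ideal could be dense in the $C^*$-norm. To rule this out I would exploit the bounded approximate identity $(e_\lambda\otimes f_\mu)$ of $A\widehat\otimes B$ coming from approximate identities of $A$ and $B$ (it is bounded by $1$ and serves as an approximate identity for the whole algebra), together with the ideal structure developed above. Concretely, the target is to establish $q^{-1}(\overline{q(K)})=K$, which reduces the required properness of $\overline{q(K)}$ to the properness of $K$ itself, since then $\overline{q(K)}=A\otimes_{\max}B$ would force $K=q^{-1}(A\otimes_{\max}B)=A\widehat\otimes B$. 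Here the slice map property (Theorem \ref{slice}) and the description of the primitive ideals $A\widehat\otimes F+E\widehat\otimes B$ from Theorem \ref{prim} are the natural tools, as they allow one to test membership in $K$ against the primitive ideals of $A\otimes_{\max}B$. Once $q^{-1}(\overline{q(K)})=K$ is in hand, properness of $\overline{q(K)}$ is immediate and the proof concludes as in the previous paragraph.
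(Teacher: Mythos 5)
Your overall architecture is exactly the paper's: transport the problem to a $C^*$-completion of $A\widehat\otimes B$ along the canonical $*$-homomorphism, use the Wiener property of $C^*$-algebras there, and pull the irreducible representation back, with irreducibility preserved because dense range gives equality of commutants. The only structural difference is that you complete in the maximal $C^*$-norm where the paper uses the minimal one (via $i:A\widehat\otimes B\to A\otimes_{\min}B$); your injectivity argument for $q$ (min $\leq$ max plus injectivity of $i$) and the dense-range commutant computation are both sound and match the paper's reasoning.

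However, there is a genuine gap, and you have located it yourself without closing it: the assertion that $q(K)$ is not dense in $A\otimes_{\max}B$ when $K$ is a proper closed ideal. Everything you write about this step is programmatic --- ``I would exploit the bounded approximate identity,'' ``the target is to establish $q^{-1}(\overline{q(K)})=K$,'' ``the slice map property \dots are the natural tools'' --- but no argument is actually given, and it is not obvious; a priori a $\|\cdot\|_{_{\wedge}}$-closed ideal could indeed be dense in a coarser $C^*$-norm, and your proposed intermediate claim $q^{-1}(\overline{q(K)})=K$ is stronger than what you need and at least as hard. This properness statement is precisely the nontrivial input of the paper's proof, which disposes of it in one line by citing \cite[Theorem 6]{kumar}: for the minimal norm, the closure $J_{\min}$ of $i(J)$ in $A\otimes_{\min}B$ is proper whenever $J$ is a proper closed two-sided ideal. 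Had you worked with the min completion, that citation would finish your argument; alternatively, the max-norm version you need follows from the min-norm version, since the canonical surjection $A\otimes_{\max}B\to A\otimes_{\min}B$ carries a dense ideal to a dense ideal, so density of $q(K)$ in the max completion would force $J_{\min}=A\otimes_{\min}B$, contradicting \cite[Theorem 6]{kumar}. As written, though, the crux of the theorem is left as an unproven expectation, so the proposal is incomplete.
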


\begin{proof}
Consider a proper closed two-sided ideal $J$ of $A\widehat\otimes B$. Let $J_{\min}$ denote the closure of $i(J)$ in $A\otimes_{\min}B$, where $i:A\widehat\otimes B \rightarrow A\otimes_{\min} B$ is the canonical homomorphism. By \cite[Theorem 6]{kumar}, $J_{\min}$ is also a proper closed two-sided ideal of the $C^*$-algebra $A\otimes_{\min} B$, and so it is annihilated by an irreducible $*$-representation $\pi:A\otimes_{\min} B \rightarrow B(H)$.  Note that the isometry of involution gives $i$ is $*$-preserving, so that we have a $*$-representation $\hat{\pi}:= \pi \circ i$  of $A\widehat\otimes B$ on $H$. Using injectivity of $i$ \cite{jain}, we have $\hat{\pi}(J)=\{0\}$. Also, the relation $\hat{\pi}(A\otimes B) = \pi (A\otimes B)$ gives
\[ \hat{\pi}(A\widehat\otimes B)^{\prime} \subseteq \pi(A\otimes B)^{\prime} = \pi(A\otimes_{\min} B)^{\prime}= \mathbb{C}I,\]
where the equality between the middle expressions follows from the norm density of $\pi(A\otimes B)$ in $\pi(A \otimes_{\min} B)$. This further implies that $\hat{\pi}$ is irreducible; hence, $A\widehat\otimes B$ has Wiener property.
\end{proof}

\vspace*{1mm}
A Banach $*$-algebra is said to be {\it symmetric} if every element of the
form $x^*x$ has positive spectrum, or equivalently, every self adjoint
element has a real spectrum \cite[Theorem 10.4.17]{palmer}. Symmetry in group algebras has been
investigated by various authors, see, for instance,
\cite{ludwig,leptin}. One can easily verify that a Banach
$\ast$-algebra $A$ is symmetric if and only if for every left modular
ideal $I$ of $A$ with modular unit $\alpha$, the set $S_I$ of
Hermitian sesquilinear forms given by
\begin{center}
  $ S_I=\{ B:A\times A \rightarrow \mathbb{C}\, |\, \,B_\alpha=B,
  B(I,A)=\{0\},  B(u,u) \geq 0,$\\
  \hspace*{50mm} $B(uw,vw) = B(v^*uw,w),\, \forall u,v,w \in A\} $
\end{center}
is non-trivial, where $B_\alpha(v,w):= B(v \alpha , w \alpha ),\,
\forall v,w \in A $   \cite{ludwig}. It is well known that every
$C^*$-algebra is symmetric \cite{palmer}. For $C^*$-algebras $A$ and
$B$, we do not know whether the Banach $\ast$-algebra
$A\widehat\otimes B$ is symmetric or not, but if one of them is
subhomogeneous, then we have an affirmative answer. Recall that a
$C^*$-algebra $A$ is {\it subhomogeneous} if there exists a positive integer
$n$ such that each irreducible representation of $A$ has dimension
less than or equal to $n$.

We first modify a result from \cite{kumar} in terms of operator
algebras. We say that a Banach algebra $A$ is an {\it operator algebra} if
there exists a Hilbert space $H$ and a bicontinuous homomorphism of
$A$ into $B(H)$.

\begin{prop}\label{oa}
  If $A$ and $B$ are operator algebras, then $A\widehat\otimes B$ is a
  Banach algebra. If $A$ and $B$ both have isometric involutions then
  $A\widehat\otimes B$ is a Banach $\ast$-algebra.
\end{prop}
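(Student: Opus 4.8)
The plan is to reduce the statement to the concrete case and then run the standard linearization machinery for $\oop$. First I would equip $A$ and $B$ with the operator space structures induced by the given bicontinuous homomorphisms $\theta_A : A \ra B(H_1)$ and $\theta_B : B \ra B(H_2)$, that is, set $\|[a_{ij}]\|_{M_n(A)} := \|[\theta_A(a_{ij})]\|_{M_n(B(H_1))}$ and similarly for $B$, so that $\theta_A$ and $\theta_B$ become complete isometries onto their images $\mathcal{A}\subseteq B(H_1)$ and $\mathcal{B}\subseteq B(H_2)$. These matrix norms induce on $A$ a norm equivalent to its original Banach-algebra norm, so completeness of $A\oop B$ is unaffected, and it suffices to treat the concrete subalgebras $\mathcal{A}$ and $\mathcal{B}$.

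For the first assertion I would use the observation that the multiplication $m: B(H_1)\times B(H_1)\ra B(H_1)$, $(S,T)\mapsto ST$, is jointly completely contractive; restricting to the subalgebra $\mathcal{A}$ shows $m_{\mathcal{A}}:\mathcal{A}\times\mathcal{A}\ra\mathcal{A}$ is jointly completely contractive, and likewise $m_{\mathcal{B}}$. By the universal property of $\oop$ — jointly completely bounded bilinear maps linearize to completely bounded maps on the operator space projective tensor product — this yields completely contractive maps $\widetilde m_{\mathcal{A}}:\mathcal{A}\oop\mathcal{A}\ra\mathcal{A}$ and $\widetilde m_{\mathcal{B}}:\mathcal{B}\oop\mathcal{B}\ra\mathcal{B}$. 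The multiplication on $\mathcal{A}\oop\mathcal{B}$ then factors as the canonical shuffle isomorphism $(\mathcal{A}\oop\mathcal{B})\oop(\mathcal{A}\oop\mathcal{B})\cong(\mathcal{A}\oop\mathcal{A})\oop(\mathcal{B}\oop\mathcal{B})$, a complete isometry by the commutativity and associativity of $\oop$, followed by $\widetilde m_{\mathcal{A}}\oop\widetilde m_{\mathcal{B}}$. Being a composition of complete contractions it is completely contractive, so $\|xy\|_\wedge\le\|x\|_\wedge\|y\|_\wedge$ and $A\oop B$ is a Banach algebra.

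For the second assertion I would define the involution on $A\otimes B$ by $(a\otimes b)^*=a^*\otimes b^*$, extended conjugate-linearly, and first verify on elementary tensors that it is an anti-homomorphism, $((a_1\otimes b_1)(a_2\otimes b_2))^*=(a_2\otimes b_2)^*(a_1\otimes b_1)^*$. The substantive point is isometry for $\|\cdot\|_\wedge$, which I would extract directly from the infimum definition: given any decomposition $u=\alpha(v\otimes w)\beta$ with $v\in M_p(A)$, $w\in M_q(B)$, $\alpha\in M_{1,pq}$, $\beta\in M_{pq,1}$, taking adjoints gives $u^*=\beta^*(v^*\otimes w^*)\alpha^*$ with $v^*=[v_{ji}^*]$ and $w^*=[w_{ji}^*]$. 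Since $\|\alpha^*\|=\|\alpha\|$ and $\|\beta^*\|=\|\beta\|$ for scalar matrices, the bound $\|u^*\|_\wedge\le\|u\|_\wedge$, and hence equality by symmetry, follows once one knows $\|v^*\|_{M_p(A)}=\|v\|_{M_p(A)}$ and $\|w^*\|_{M_q(B)}=\|w\|_{M_q(B)}$.

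This last point is where I expect the main obstacle to lie. In the $C^*$-setting of \cite{kumar} it is automatic, since each $M_p(A)$ is again a $C^*$-algebra whose involution $[v_{ij}]\mapsto[v_{ji}^*]$ is isometric. For a general operator algebra with isometric involution it is not free: one must ensure the chosen operator space structure is compatible with the involution, i.e.\ that the involution is isometric at every matrix level $M_p(A)$ (equivalently, completely isometric). I would therefore either adopt this matrix-level isometry as the precise meaning of ``isometric involution'' in the operator-algebra setting, or arrange $\theta_A$ to be a $*$-homomorphism onto its image, so that the matrix involution on $M_p(A)$ is transported from the isometric $*$-operation on $M_p(B(H_1))$. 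Granting this compatibility, the infimum computation above produces an isometric anti-linear involution, and $A\oop B$ is a Banach $*$-algebra.
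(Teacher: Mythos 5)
Your proposal is correct in substance and reaches the same structural endpoint as the paper, but by a different key mechanism. The paper does not pass through joint complete contractivity of multiplication on $B(H)$; instead it invokes the Blecher--LeMerdy theorem \cite[Theorem 1.3]{blemer} that for an operator algebra $A$ the multiplication $m_A: A\otimes_h A \ra A$ is completely bounded, and then composes with the canonical complete contraction $A\oop A \ra A\otimes_h A$ to get $m'_A: A\oop A \ra A$ completely bounded. From there the two arguments coincide: both tensor $m'_A$ with $m'_B$ and use commutativity (and associativity) of $\oop$ to shuffle $(A\oop B)\oop(A\oop B)$ into $(A\oop A)\oop(B\oop B)$, concluding that multiplication is bounded. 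Your route buys a self-contained argument (elementary joint complete contractivity of multiplication in $B(H)$, plus the universal property of $\oop$ for jointly completely bounded bilinear maps) at the cost of first realizing $A$ and $B$ completely isometrically; the paper's route delegates exactly that realization issue to Blecher--LeMerdy and avoids touching the operator space structures. For the involution, the paper simply says the proof ``follows as in \cite{kumar}''; you actually execute that computation, and your identification of the genuine crux --- that one needs $\|v^*\|_{M_p(A)}=\|v\|_{M_p(A)}$ at \emph{every} matrix level, which is automatic in the $C^*$-setting of \cite{kumar} but not for a general operator algebra with merely level-one isometric involution --- is more careful than the paper, whose stated hypothesis ``isometric involutions'' elides this point. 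Your proposed fix (read ``isometric'' as completely isometric, or transport the involution through a $*$-compatible embedding) is the right repair, and it is satisfied in the paper's only application, where the second factor is $M_m\oop B^{**}$.

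One step of your reduction does need repair, though it mirrors an implicit assumption in the paper rather than sinking the argument. You claim that re-equipping $A$ and $B$ with the matrix norms induced by $\theta_A,\theta_B$ leaves $A\oop B$ unaffected because the level-one norms are equivalent. That inference is not valid: the $\wedge$-norm is defined through decompositions $u=\alpha(v\otimes w)\beta$ involving \emph{all} matrix levels, so equivalence of the level-one norms says nothing about equivalence of the two $\oop$-norms. If $A$ carries a pre-assigned operator space structure (as it does in the paper's application, where Proposition~\ref{oa} is applied to $B=M_m\oop B^{**}$ with its $\oop$ matrix norms, not with norms induced by some embedding), your argument only applies if $\theta_A,\theta_B$ are completely bounded with completely bounded inverses onto their ranges --- i.e.\ complete isomorphisms --- or if one stipulates that the operator space structure of an operator algebra \emph{is} the induced one. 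The paper's appeal to \cite[Theorem 1.3]{blemer} carries the same hidden requirement (the multiplication is completely bounded only with respect to an operator space structure completely isomorphic to a concrete one), so you should state this hypothesis explicitly rather than assert the reduction is free.
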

\begin{proof}
  It is known that if A is an operator algebra then the multiplication
  operator $m:A\otimes_h A \rightarrow A$ given by $m(a\otimes b)=ab$
  is completely bounded   \cite[Theorem 1.3]{blemer}. Using this
  result, we get the completely bounded operators 
$$
m_A:A\otimes_h A \rightarrow A \ \text{and} \ m_B:B\otimes_h B
\rightarrow B.
$$ 
Now consider the canonical map $i:A\widehat\otimes A \rightarrow
A\otimes_h A$, which is a completely contractive homomorphism. Then, the
multiplication operator $m^{\prime}_{A}:A\widehat\otimes A \rightarrow
A$, which can be regarded as $m^{\prime}_{A} = m_A \circ i$, is completely
bounded. Similarly, the multiplication operator
$m^{\prime}_{B}:B\widehat\otimes B \rightarrow B$ is also completely bounded. In particular, 
the operator 
$$
m^{\prime}_{A}\otimes m^{\prime}_{B} :(A\widehat\otimes
A)\widehat\otimes (B\widehat\otimes B) \rightarrow A\widehat\otimes B
$$ 
is bounded. Using the commutativity of `$\wedge$', the operator
$$
m^{\prime}_{A}\otimes m^{\prime}_{B} :(A\widehat\otimes
B)\widehat\otimes (A\widehat\otimes B) \rightarrow A\widehat\otimes B
$$ 
is also bounded. Hence, $A\widehat\otimes B$ is a Banach algebra. The proof for involution follows as in \cite{kumar}.
\end{proof}

\begin{lemma}\label{fd}
Let  $A$ and $B$ be $C^*$-algebras with either $A$ or $B$ finite-dimensional. Then  $A\widehat\otimes B$ is a symmetric operator algebra.
\end{lemma}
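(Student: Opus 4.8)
The plan is to show that finite-dimensionality of one factor collapses the operator space projective tensor product onto the minimal tensor product, so that $A\widehat\otimes B$ is, up to an equivalent renorming, nothing but the $C^*$-algebra $A\otimes_{\min}B$; both the operator-algebra structure and symmetry then follow at once. By the commutativity of $\widehat\otimes$ (as used in Proposition \ref{oa}) I may assume without loss of generality that $A$ is finite-dimensional.

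First I would prove that $A\otimes B$ is \emph{already} complete in $\|\cdot\|_{_{\wedge}}$, i.e. $A\widehat\otimes B = A\otimes B$ as sets. Fix a basis $e_1,\dots,e_d$ of $A$ with dual basis $\phi_1,\dots,\phi_d\in A^*$. The slice maps $R_{\phi_i}\colon A\widehat\otimes B\to B$ are bounded, and since $\{\phi_i\}$ spans $A^*$ and $\phi\mapsto R_\phi$ is linear, Lemma \ref{total} shows that $x\mapsto (R_{\phi_1}(x),\dots,R_{\phi_d}(x))$ is injective on $A\widehat\otimes B$. Given $x\in A\widehat\otimes B$, the element $x'=\sum_i e_i\otimes R_{\phi_i}(x)\in A\otimes B$ satisfies $R_{\phi_j}(x-x')=0$ for every $j$, hence $R_\phi(x-x')=0$ for all $\phi\in A^*$, so $x=x'\in A\otimes B$. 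The same module argument (or the identification $A\otimes_{\min}B=\bigoplus_k M_{n_k}(B)$ coming from $A=\bigoplus_k M_{n_k}$) shows $A\otimes_{\min}B=A\otimes B$ as sets. Thus $A\widehat\otimes B$ and $A\otimes_{\min}B$ are two complete norms on the \emph{same} underlying $*$-algebra $A\otimes B$, with the same multiplication and the canonical involution $(a\otimes b)^*=a^*\otimes b^*$.

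Next I would invoke the open mapping theorem. The canonical map $i\colon A\widehat\otimes B\to A\otimes_{\min}B$ is the identity on $A\otimes B$ and is contractive because $\|\cdot\|_{_{\wedge}}\ge\|\cdot\|_{h}\ge$ (more directly, $\|\cdot\|_{\min}\le\|\cdot\|_{_{\wedge}}$); being a continuous bijection of Banach spaces, its inverse is bounded, so $\|\cdot\|_{_{\wedge}}$ and $\|\cdot\|_{\min}$ are equivalent. Composing $i$ with a faithful (hence isometric) representation $A\otimes_{\min}B\hookrightarrow B(H)$ then gives a bicontinuous homomorphism of $A\widehat\otimes B$ into $B(H)$, so $A\widehat\otimes B$ is an operator algebra.

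Finally, symmetry is immediate: since $A\widehat\otimes B$ and $A\otimes_{\min}B$ coincide as $*$-algebras, invertibility, and hence the spectrum of each element, is the same in both (it is an algebraic notion), so $A\widehat\otimes B$ inherits the symmetry of the $C^*$-algebra $A\otimes_{\min}B$, which is symmetric because every $C^*$-algebra is. The one step requiring genuine care is the first: verifying that the finite coordinate decomposition furnished by the slice maps really exhausts $A\widehat\otimes B$ rather than merely a dense subspace, which is exactly where Lemma \ref{total} and finite-dimensionality are both essential. Everything afterwards is soft functional analysis.
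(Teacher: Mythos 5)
Your proposal is correct and takes essentially the same route as the paper: the paper's entire proof is the one-line assertion that finite-dimensionality of one factor makes $A\widehat\otimes B$ $*$-isomorphic to the $C^*$-algebra $A\otimes_{\min}B$, which is exactly what you establish, with the dual-basis slice-map decomposition via Lemma \ref{total} and the open mapping theorem supplying the details the paper dismisses as ``clearly''. Your verification that the algebraic tensor product is already complete in both norms, and that symmetry transfers because the spectrum is an algebraic notion, is a sound and complete filling-in of that assertion.
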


\begin{proof} If $A$ or $B$ is finite dimensional, then clearly, $A\widehat\otimes B$ is $*$-isomorphic to $A\otimes_{\min} B$, which gives the required result.

\end{proof}

\begin{lemma}\label{comm}
  If $A$ is a commutative unital $C^*$-algebra and $B$ is a symmetric
  unital operator algebra with isometric involution, then $A\widehat\otimes B$
  is symmetric.
\end{lemma}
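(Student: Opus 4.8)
```latex
The plan is to show that for the commutative unital $C^*$-algebra
$A$, we can write $A = C(X)$ for some compact Hausdorff space $X$, and
then exploit the fact that the operator space projective tensor product
with a commutative $C^*$-algebra can be realized as a space of
$B$-valued functions. Since $A$ is commutative and unital, the key
structural fact I would invoke is that $A \oop B$ embeds into (or is
closely related to) the algebra of continuous $B$-valued functions on
$X$ under pointwise operations. The self-adjoint elements of such a
function algebra are precisely the functions taking self-adjoint values,
and the spectrum of a $B$-valued function should be computed fiberwise
through the evaluation homomorphisms $\mathrm{ev}_x : A \oop B \to B$
induced by the characters of $A$.

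First I would establish that every character $\chi$ of $A$ induces a
continuous $\ast$-homomorphism $\chi \oop \mathrm{id}_B : A \oop B \to
\mathbb{C} \oop B \cong B$; this uses that $\chi \otimes \mathrm{id}$ is
bounded on the tensor product with respect to the `$\wedge$'-norm and
$\ast$-preserving by the isometry of the involution. Next I would argue
that the family of these evaluation maps, as $\chi$ ranges over the
character space of $A$, is jointly sufficient to detect the spectrum:
specifically, for a self-adjoint $u \in A \oop B$, I would show that the
spectrum of $u$ is contained in (indeed, recovered from) the union of
the spectra of $(\chi \oop \mathrm{id}_B)(u)$ in $B$ over all $\chi$.
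Since $B$ is symmetric, each such image being self-adjoint in $B$ has
real spectrum, and so the union is real, forcing $\sigma(u) \subseteq
\mathbb{R}$. By the equivalent characterization of symmetry via real
spectra of self-adjoint elements quoted in the text, this establishes
symmetry of $A \oop B$.

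The hard part will be controlling the spectrum of $u$ through the
fiberwise evaluations. The subtlety is that an element $\lambda - u$
being invertible in $A \oop B$ is generally stronger than each fiber
$\lambda - (\chi \oop \mathrm{id}_B)(u)$ being invertible in $B$; the
reverse implication requires that local invertibility across all fibers
patches to a global inverse within the (completed) tensor product norm.
I expect to handle this by a Gelfand-theory argument for the commutative
factor: the character space $X$ of $A$ is compact, $x \mapsto (\chi_x
\oop \mathrm{id}_B)(u)$ should be suitably continuous into $B$, and a
compactness-plus-uniform-invertibility argument (the fiber inverses are
uniformly bounded over the compact $X$) would let me assemble a genuine
inverse in $A \oop B$. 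An alternative, possibly cleaner, route would
reduce to the finite-dimensional case of Lemma \ref{fd} by approximating
$A = C(X)$ through finite-dimensional commutative subalgebras (or
partitions of unity on $X$) and transferring symmetry in the limit,
using that symmetry is preserved under the relevant approximation;
whichever approach is taken, verifying that invertibility passes from
the fibers to the whole tensor product is the crux of the argument.
```
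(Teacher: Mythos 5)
Your outline follows the paper's strategy: characters of $A$ induce $*$-homomorphisms $h_M : A \oop B \rightarrow B$ (your $\chi \oop \mathrm{id}_B$), symmetry of $B$ makes each fiber spectrum real, and everything reduces to showing $\sigma(u) \subseteq \bigcup_{M \in \Phi(A)} \sigma(h_M(u))$. But the step you yourself flag as the crux --- that invertibility of every fiber $h_M(x)$ forces invertibility of $x$ in $A \oop B$ --- is left unproved, and neither of your proposed routes closes it as stated. The paper disposes of it in one stroke by citing Lebow's theorem \cite[Corollary 2]{lebow}, which asserts precisely that for a commutative unital Banach algebra tensored, in a suitable cross-norm, with a unital Banach algebra, an element is invertible if and only if all its character evaluations are. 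Your compactness-plus-uniform-boundedness patching does not suffice on its own: continuity of $x \mapsto h_{M_x}(u)$ and uniform boundedness of the fiber inverses produce an inverse only in $C(X,B)$, and the canonical map $A \oop B \rightarrow C(X,B)$ is injective but far from surjective (already $C(X) \otimes_\gamma B \subsetneq C(X,B)$ in general, and $\|\cdot\|_\wedge \geq \|\cdot\|_h$ dominates the sup norm), so the pointwise inverse need not lie in $A \oop B$ at all. Promoting it to an element of the completed tensor product is exactly the content of Lebow's result, which is proved via Gelfand theory on quotients by maximal modular ideals, not by patching local inverses.

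Your alternative route through Lemma \ref{fd} is also not viable as sketched: finite-dimensional $*$-subalgebras of $C(X)$ correspond to finite clopen partitions of $X$, so for connected $X$ (e.g.\ $X=[0,1]$) there is nothing to approximate with; and even where an increasing union exists, symmetry does not automatically pass to the closure, because the spectrum is only upper semicontinuous --- real spectra of approximants $u_n \rightarrow u$ do not by themselves force $\sigma(u) \subseteq \mathbb{R}$. (A minor additional point: since $B$ is only an operator algebra, you should also record that $A \oop B$ is in fact a Banach $*$-algebra, which the paper gets from Proposition \ref{oa}.) To complete your argument, either invoke \cite[Corollary 2]{lebow} as the paper does, or prove the invertibility-detection statement directly, e.g.\ by showing that every maximal modular (left) ideal of $A \oop B$ contains $M \oop B$, hence $\ker h_M$, for some $M \in \Phi(A)$.
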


\begin{proof}
  Note that $A\widehat\otimes B$ is a Banach $*$-algebra by Proposition \ref{oa}. Let $\Phi(A)$ denote the set of maximal ideals of $A$, then it is in
  one-one correspondence with the space of non-zero $*$-homomorphisms
  of $A$. For $M \in \Phi(A)$, define $h_M: A\otimes B \rightarrow B$
  by $h_M (\sum a_i \otimes b_i)= \sum a_i(M) b_i$. It is bounded with
  respect to `$\wedge$'-norm, so can be extended to $A\widehat\otimes
  B$ as a $*$-homomorphism. Then, by \cite[Corollary 2]{lebow}, an
  element $x$ of $A\widehat\otimes B$ is invertible if and only if
  $h_M(x)$ is invertible for each maximal ideal $M$ of $A$. Thus,
$$ \sigma(x) = \bigcup_{M \in \Phi(A)} \sigma(h_M(x)), $$
where $\sigma(x)$ denotes the spectrum of $x$ in $A\widehat\otimes
B$. Now consider a self-adjoint element $u$ in $A\widehat\otimes
B$. For any $M \in \Phi(A)$, $h_M$ being $*$-preserving,
$h_M(u)$ is self-adjoint in $B$. But $B$ is symmetric, so
$$ \sigma(u) = \bigcup_{M \in \Phi(A)} \sigma(h_M(u)) \subseteq \mathbb{R}. $$
Hence, $A \widehat\otimes B$ is symmetric.
\end{proof}

\begin{remark}
 Note that one can also prove the above lemma using an argument similar to that in \cite[Corollary 3.3]{bonic}.
\end{remark}

\begin{theorem}\label{symm}
  If $A$ is a subhomogeneous $C^*$-algebra, then for any $C^*$-algebra
  $B$, $A\widehat\otimes B$ is symmetric.
\end{theorem}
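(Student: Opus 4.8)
\emph{Strategy.} The plan is to exhibit $A$ as a $C(Y)$-algebra with finite-dimensional fibres and to detect invertibility in $A\oop B$ fibrewise over its commutative centre, so that symmetry is reduced to the finite-dimensional case already settled in Lemma~\ref{fd}. First I would pass to the unital situation. Replacing $A$ and $B$ by their unitisations $A^\sim$ and $B^+$, note that $A^\sim$ is again subhomogeneous of the same degree (unitisation only adjoins the one-dimensional character), that $A^\sim\oop B^+$ is a Banach $*$-algebra \cite{kumar}, and that $A\oop B$ is a closed $*$-ideal of $A^\sim\oop B^+$ (being the product of the closed $*$-ideals $A\lhd A^\sim$ and $B\lhd B^+$). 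Since the non-zero spectrum of an element of a closed two-sided ideal is unchanged when computed in the whole algebra (spectral permanence for ideals, \cite{palmer}), symmetry descends from $A^\sim\oop B^+$ to its closed $*$-ideal $A\oop B$. Thus it suffices to prove $A^\sim\oop B^+$ symmetric, and I may assume $A$ and $B$ are unital.

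\emph{Fibres over the centre.} By the structure theory of subhomogeneous $C^*$-algebras \cite{blackadar}, $A$ is a $C(Y)$-algebra over the compact Hausdorff space $Y=\widehat{Z(A)}$, the maximal ideal space of its centre, with finite-dimensional fibres $A_y=A/I_y$, where $I_y=\ker(\mathrm{ev}_y)$ is the closed ideal of $A$ determined by $y$. The copy $Z(A)\cong C(Y)$ embeds as a unital commutative $C^*$-subalgebra of the centre of $A\oop B$, and I would invoke the local principle of \cite{lebow} exactly as in Lemma~\ref{comm}, but now applied to this central subalgebra rather than to the whole of $A$: an element $u\in A\oop B$ is invertible if and only if its image is invertible in the fibre over each $y\in Y$. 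By Lemma~\ref{ker} the kernel of $\mathrm{ev}_y\oop\mathrm{id}_B$ is the product ideal $I_y\oop B$, so the fibre of $A\oop B$ at $y$ is $A_y\oop B$; and since $A_y$ is finite-dimensional, Lemma~\ref{fd} identifies $A_y\oop B$ with the $C^*$-algebra $A_y\otimes_{\min}B$.

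\emph{Conclusion and main obstacle.} Combining these, for a self-adjoint $u\in A\oop B$ the local principle yields
\[ \sigma(u)=\bigcup_{y\in Y}\sigma_{A_y\oop B}\big((\mathrm{ev}_y\oop\mathrm{id}_B)(u)\big), \]
and as each $\mathrm{ev}_y\oop\mathrm{id}_B$ is $*$-preserving, its value at $u$ is a self-adjoint element of the $C^*$-algebra $A_y\otimes_{\min}B$, whose spectrum is real. Hence $\sigma(u)\subseteq\mathbb{R}$, so $A\oop B$ is symmetric, which completes the proof after the reduction of the first paragraph. The step I expect to be the main obstacle is the fibrewise invertibility criterion: Lemma~\ref{comm} uses \cite{lebow} when the \emph{entire} first factor is commutative, whereas here only the centre $Z(A)$ is commutative, so one must verify that the cited local principle applies to a central commutative $C^*$-subalgebra of a general (noncommutative) Banach $*$-algebra, and that the fibres it produces coincide with the quotients $A_y\oop B$ furnished by Lemma~\ref{ker}. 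The supporting structural input — that a subhomogeneous $C^*$-algebra is a $C(Y)$-algebra with finite-dimensional fibres — is standard but should also be cited with care.
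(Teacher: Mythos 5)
There is a genuine gap, and it is the structural claim on which your whole localization rests: that a (unital) subhomogeneous $C^*$-algebra is a $C(Y)$-algebra over $Y=\widehat{Z(A)}$ with \emph{finite-dimensional} fibres $A_y=A/I_y$. Every unital $C^*$-algebra is a $C(Y)$-algebra over its centre, but for subhomogeneous $A$ the Glimm fibres need not be finite-dimensional, and no such statement is in \cite{blackadar}. A concrete counterexample is the ``staircase'' algebra
\[
A=\bigl\{\,f\in C([0,\infty],M_2)\;:\; f(n)\ \text{is diagonal and}\ f(n)_{22}=f(n+1)_{11}\ \text{for all}\ n\in\mathbb{N}\,\bigr\},
\]
a unital $2$-subhomogeneous $C^*$-algebra. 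Here the linking condition forces every central element $c\cdot 1$ to satisfy $c(n)=c(n+1)$ for all $n$, so all integer points and $\infty$ are identified to a single point $y_0$ of $\widehat{Z(A)}$; the corresponding fibre $A_{y_0}=A/\overline{m_{y_0}A}$ is the restriction algebra to $\mathbb{N}\cup\{\infty\}$, which is $*$-isomorphic to the algebra of convergent scalar sequences --- infinite-dimensional. Replacing the scalar diagonal entries by $M_2$-blocks (a $4$-subhomogeneous staircase inside $C([0,\infty],M_4)$) even yields an infinite-dimensional \emph{noncommutative} subhomogeneous fibre, isomorphic to the convergent $M_2$-valued sequences. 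For such fibres, Lemma \ref{fd} does not apply, $A_{y}\oop B$ is not a $C^*$-algebra, and your argument becomes circular: the ``local'' algebra $A_y\oop B$ is a problem of exactly the same type as the one you started with. By contrast, the obstacle you flagged yourself is actually the harmless one: the fibrewise invertibility criterion over a closed unital subalgebra of the centre is Allan's local principle, a standard generalization of \cite{lebow}, and the identification of the local ideals with $I_y\oop B$ follows from Lemma \ref{ker} together with an approximate-identity argument. (Your unitization step is also fine: $A\oop B$ embeds as a closed $*$-ideal with equivalent norm, and symmetry passes to closed $*$-ideals via spectral permanence \cite{palmer}.)

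The paper circumvents precisely this failure by passing to the bidual: $A\oop B$ sits in $A^{**}\oop B^{**}$ as a closed $*$-subalgebra, and for $n$-subhomogeneous $A$ the von Neumann algebra $A^{**}$ is a finite direct sum of type $I_m$ summands $M_m\overline{\otimes}C$ with $C$ commutative, $m\le n$ \cite{blackadar}. At the bidual level the commutative part splits off as a genuine \emph{tensor factor}, so $M_m(C)\oop B^{**}\cong C\oop(M_m\oop B^{**})$ by commutativity and associativity of $\oop$; then Lemma \ref{fd} handles $M_m\oop B^{**}$, Lemma \ref{comm} (which is your Lebow argument, legitimately applied because the first factor is now entirely commutative) handles the $C$-localization, and symmetry of finite direct sums finishes. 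In short: your localization over $Z(A)$ is the right instinct, but it only works after the bidual replaces the messy Glimm stratification of $A$ by a clean direct sum in which the fibres really are matrix algebras; at the level of $A$ itself the finite-dimensionality of fibres is simply false.
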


\begin{proof}
  Since $A\widehat\otimes B$ can be isometrically embedded in $A^{**}
  \widehat\otimes B^{**}$ as a closed $*$-subalgebra, it is sufficient
  to show that $A^{**} \widehat\otimes B^{**}$ is symmetric. Let $A$
  be $n$-subhomogeneous, then $A^{**}$ is a direct sum of type $I_m$
  von Neumann algebras for $m \leq n$   \cite[Theorem
  IV.1.4.6]{blackadar}. Also each type $I_m$ von Neumann algebra is
  isomorphic to $M_m \overline{\otimes} C$,
  where $M_m$ is the set of $m \times m$ complex matrices and $C$ is a
  commutative von Neumann algebra \cite[III.1.5.12]{blackadar}. Thus,
  $A^{**} \widehat\otimes B^{**}$ is $*$-isomorphic (not necessarily
  isometrically) to a direct sum of some $M_m(C) \widehat\otimes
  B^{**}$. For each $m$, $M_m(C)$ is isomorphic to $M_m
  \widehat\otimes C$; so, using the commutativity and associativity of
  the operator space projective norm, we get $M_m(C) \widehat\otimes
  B^{**}$ is $*$-isomorphic to $C \widehat\otimes (M_m \widehat\otimes
  B^{**})$. Note that, Lemma \ref{fd} gives  $M_m
  \widehat\otimes B^{**}$ is an operator algebra with an isometric involution
  and is symmetric; so, by Lemma \ref{comm}, $M_m(C) \widehat\otimes
  B^{**}$ is symmetric. Hence, $A^{**} \widehat\otimes B^{**}$ is
  symmetric being the direct sum of symmetric Banach $*$-algebras \cite[Theorem 11.4.2]{palmer}
\end{proof}

\begin{remark}
  If $A$ is commutative and $B$ is any $C^*$-algebra, then, by
  \cite[Corollary 3.3]{bonic}, $A\otimes_\gamma B$ is
  symmetric. However, the symmetry of $A\otimes_\gamma B$ when $A$ is
  subhomogeneous and $B$ is any $C^*$-algebra follows as in Theorem
  \ref{symm}.
\end{remark}



\begin{thebibliography}{99}
\bibitem{ass} S. D. Allen, A.M. Sinclair and R. R. Smith, The ideal
  structure of the Haagerup tensor product of $C^*$-algebras, J. Reine
  Angew. Math. {\bf 442}(1993), 111-148.
\bibitem{archbold}R. J. Archbold, A counter example for commutation in
  tensor products of $C^*$-algebras, Proc. Amer. Math Soc. {\bf
    81(4)}(1981), 562-564.
\bibitem{arch} R. J. Archbold, E. Kaniuth, G. Schlichting and
  D. W. B. Somerset, Ideal space of the Haagerup tensor product of
  $C^\ast$-algebras, Internat. J. Math. {\bf 8}(1997), 1-29.
\bibitem{blackadar} B. Blackadar, Operator algebras- Theory of
  $C^*$-algebras and von-Neumann algebras, Springer-Verlag, 2006.
\bibitem{ble-oa} D. P. Blecher, Tensor products which do not preserve operator algebras, Math. Proc. Camb. Phil. Soc. {\bf 108}(1990), 395-403.
\bibitem{blemer} D. P. Blecher and C. LeMerdy, On quotients of
  function algebras and operator algebra structures on $l_p$,
  J. Operator Theory {\bf 34}(1995), 315-346.
\bibitem{blepau} D. P. Blecher and V. I. Paulsen, Tensor product of
  operator spaces, J. Funct. Anal. {\bf 99}(1991), 262-292.
\bibitem{bonic} R. A. Bonic, Symmetry in group algebras of discrete
  groups, Pacific J. Math {\bf 11(1)}(1961), 73-94.
\bibitem{effrua1} E. G. Effros and Z. J. Ruan, On matricially normed
  spaces, Pacific J. Math. {\bf 132(2)}(1988), 243-264.
\bibitem{effrua2} E. G. Effros and Z. J. Ruan, A new approach to
  operator space, Canad. Math. Bull. {\bf 34(3)}(1991), 329-337.
\bibitem{haag} U. Haagerup, The Grothendieck inequality for bilinear
  forms on $C^*$-algebras, Advances in Math. {\bf 56}(1985), 93-116.
\bibitem{kumar2} W. Hauenschild, E. Kaniuth and A. Kumar, Ideal
  structure of Beurling algebras on $[FC]^{-}$ groups,
  J. Funct. Anal. {\bf 51}(1983), 213-228.
\bibitem{jain} R. Jain and A. Kumar, Operator space tensor products of
  $C^*$-algebras, Math. Zeit. {\bf 260}(2008), 805-811.
\bibitem{ranj2} R. Jain and A. Kumar, Operator space projective tensor
  product: Embedding into second dual and ideal structure, arXiv:1106.2644v1 [math.OA] 14 Jun 2011.
\bibitem{kan} E. Kaniuth, A course in commutative Banach algebras,
  Springer-Verlag, 2009.
\bibitem{kumar} A. Kumar, Operator space projective tensor product of
  $C^*$-algebras, Math. Zeit. {\bf 237}(2001), 211-217.
\bibitem{kumar1} A. Kumar and A. M. Sinclair, Equivalence of norms on operator space tensor products of $C^*$-algebras, Trans. Amer.  Math. Soc. {\bf 350}(1998), 2033-2048.
\bibitem{lebow} A. Lebow, Maximal ideals in tensor products of Banach
  algebras, Bull. Amer. Math. Soc. {\bf 74(11)}(1968), 1020-1022.
\bibitem{leptin} H. Leptin and D. Poguntke, Symmetry and non symmetry
  for locally compact groups, J. Funct. Anal. {\bf 33}(1979), 119-134.
\bibitem{loy} R. J. Loy, Identities in tensor product of Banach
  algebras, Bull. Aust. Math. Soc. {\bf 2}(1970), 253-260.
\bibitem{ludwig} J. Ludwig, A class of symmetric and a class of Wiener
  group algebras, J. Funct. Anal. {\bf 31}(1979), 187-194.
\bibitem{palmer} T. W. Palmer, Banach algebras and the general theory
  of $*$-algebras II, Cambridge University Press, 2001.
\bibitem{smith} R. R. Smith, Completely bounded module maps and the
  Haagerup tensor product, J. Funct. Anal. {\bf 102}(1991), 156-175.
\bibitem{takesaki} M. Takesaki, Theory of operator algebras I,
  Springer-Verlag, 2000.
\bibitem{tomiyama1} J. Tomiyama, Applications of Fubini type theorem
  to the tensor products of $C^*$-algebras, Tohoku Math J. {\bf
    19(2)}(1967), 213-226.
\bibitem{wass} S. Wassermann, The slice map problem for
  $C^*$-algebras, Proc. London Math. Soc. {\bf 32(3)}1976, 537-559.
\bibitem{wass2} S. Wassermann, On tensor products of certain group
  $C^*$-algebras, J. Funct. Anal. {\bf 23}(1976), 239-254.
\end{thebibliography}
\end{document}